\newtheorem{thm}{Theorem}[section]
\newtheorem{prop}[thm]{Proposition}
\newtheorem{lem}[thm]{Lemma}
\newtheorem{cor}[thm]{Corollary}
\theoremstyle{definition}
\newcommand{\bbE}{\mathbb{E}}
\newcommand{\bbP}{\mathbb{P}}
\newcommand{\bbR}{\mathbb{R}}
\newcommand{\bbZ}{\mathbb{Z}}
\newcommand{\bfP}{\mathbf{P}}
\newcommand{\bfu}{\mathbf{u}}
\newcommand{\bfv}{\mathbf{v}}
\newcommand{\one}{\mathbf{1}}
\newcommand{\mrJ}{\mathrm{J}}
\newcommand{\mrL}{\mathrm{L}}
\newcommand{\mrM}{\mathrm{M}}
\newcommand{\Ber}{\mathrm{Ber}}
\newcommand{\Pois}{\mathrm{Pois}}
\newcommand{\mrI}{\mathrm{I}}
\title{Local convergence in $t$-PNG}
\author{M\'arton Bal\'azs}
\address[M\'arton Bal\'azs]{School of Mathematics, University of Bristol.}
\email{m.balazs@bristol.ac.uk}
\author{Ruby Bestwick}
\address[Ruby Bestwick]{Mathematical Institute, University of Oxford.}
\email{ruby.bestwick@bnc.ox.ac.uk}
\author{Artem Borisov}
\address[Artem Borisov]{School of Mathematics, University of Bristol.}
\email{tv23753@bristol.ac.uk}
\author{Elnur Emrah}
\address[Elnur Emrah]{School of Mathematics, University of Bristol.}
\email{e.emrah@bristol.ac.uk}
\author{Jessica Jay} 
\address[Jessica Jay] {School of Mathematical Sciences, Lancaster University.}
\email{j.jay@lancaster.ac.uk}
\begin{document}

\begin{abstract}
 We prove local convergence of the $t$-PNG model with zero boundary to the stationary $t$-PNG model, confirming a recent conjecture of Drillick and Lin \cite[Remark 1.4]{Dril_Lin_24}. The stationary $t$-PNG model is the one with both left and bottom boundaries of Poisson nucleations with rate parameters $\frac{1}{\lambda(1-t)}$ and $\lambda$, respectively, for some $\lambda>0$. In the proof, we consider the trajectories of certain second class particles via a basic monotone coupling of three $t$-PNG processes, and adapt \emph{microscopic concavity} ideas used in particle models (e.g., Bal\'azs--Sepp\"al\"ainen \cite{Bala_Sepp_09}), as well as blocking measure bounds like in Ferrari--Kipnis--Saada \cite{fks_91}.
\end{abstract}
\maketitle
\noindent
\textit{Keywords}: Hammersley's process, KPZ universality class, stabilisation, $t$-PNG.

\section{Introduction}

\subsection{Background.}
A popular class of models for randomly growing interfaces are polynuclear growth models (PNG) 
\cite{meakin_98}. 
The growing interface is represented by a continuous broken line in the Euclidean plane, made up of horizontal linear segments and up/down steps of height 1 between them. Over time these up steps move at speed 1 to the left and similarly the down steps at speed 1 to the right, respectively. These moving up/down steps can be thought of as the growth of islands. When an up step and a down step meet they annihilate each other; in other words, the corresponding islands merge. Also new islands are created randomly by adding a new up and down step pair at infinitesimal distance from each other that immediately begin to grow. The creation of new islands, also known as \emph{nucleations}, happens at the space-time points of a 2-dimensional Poisson process of intensity 1.

In this paper we consider the $t$-PNG model recently introduced by Aggarwal, Borodin and Wheeler \cite{Agga_Boro_Whee_23}. The $t$-PNG model is a one-parameter deformation of PNG where, upon a down step from the left meeting an up step from the right, with probability \(1-t\) they annihilate each other leaving a horizontal segment behind, while with probability \(t\) they send off a new up step to the left and down step to the right, creating a new growing island this way. The original PNG model is recovered as the case \(t=0\).

Since their introduction, PNG models have been well studied. It is well known that they belong to the KPZ universality class (see e.g., the work of Krug and Spohn \cite{Krug_Spoh_92}). As such, links between PNG and other stochastic processes have also been explored. For example in \cite{Joha_03}, Johansson considered the discrete polynuclear growth model, a special case of this is closely related to last-passage percolation as in Johansson \cite{Joha_00}. Johansson built on the work of Pr\"ahofer and Spohn \cite{Prah_Spoh_02} to prove a functional limit theorem of the convergence of discrete PNG to the Airy process. 

More recently, in \cite{Fitz_24}, Fitzgerald considered a discrete time random interface growth model which, in a particular limit, converges to PNG. In particular, the growth model considered is defined similarly to PNG but where the boundaries of an island (started from a nucleation) spread stochastically as opposed to at deterministic speed. Fitzgerald remarks that PNG can be viewed as one model in a larger class of exactly solvable growth models, in the way that the height function of totally asymmetric simple exclusion (TASEP) can be seen as one in the larger class of growth models including the height functions of asymmetric simple exclusion (ASEP), $q$-TASEP and PushASEP. The PNG model with stochastic spread can be interpreted via interacting particle systems with blocking and pushing interactions. 
 
In \cite{Agga_Boro_Whee_23}, the authors proved a one-point fluctuation result for $t$-PNG, demonstrating that this deformed version still belongs to the KPZ universality class. More recently, Drillick and Lin \cite{Dril_Lin_24} employed softer arguments to prove a.s.\ convergence to the shape function in the law of large numbers regime.

The PNG model lends itself to several further interpretations; see, for example, Forrester \cite{Forr_03}. The \emph{droplet} version of PNG has initial interface that is flat and all nucleations are taken to happen in the light cone of the origin, $\{(x,\tau)\in\mathbb{R}\times\mathbb{R}_{\geq 0}: |x|<\tau\}$. Following the space-time trajectories of the up and down steps, then rotating this picture by 45 degrees turns the PNG model into \emph{Hammersley's process}. In fact, Hammersley's original description \cite{Hamm_72} in 1972 well pre-dates PNG. In this picture one considers a unit intensity Poisson process on $\mathbb{R}^2$; these correspond to the nucleation points in PNG. A set of points $(x_i,y_i)$ in the plane form a \emph{chain} if there is an up-right path passing through all the points. The length of a chain is then defined to be the number of points in the chain. Pick vectors \(\mathbf u\) and \(\mathbf v\) in \(\mathbb R_{\ge0}^2\) with $\mathbf0\le\mathbf u<\mathbf v$ understood coordinatewise and take $S_{\mathbf u,\mathbf v}$ to be the rectangle (with sides parallel to the coordinate axes) with opposite corners $\mathbf u$ and $\mathbf v$ in the Euclidean plane. Then consider the points of the Poisson process that lie in $S_{\mathbf u,\mathbf v}$. We set $\mrL_{\mathbf u,\mathbf v}$ to be the length of the longest chain that can be formed from the Poisson points in $S_{\mathbf u,\mathbf v}$. These chain lengths define Hammersley's process. In \cite{Hamm_72} it is shown that if there are $n$ points in $S_{\mathbf0,\mathbf v}$, then $\mrL_{\mathbf0,\mathbf v}$ has the same distribution as the length of the longest non-decreasing subsequence of a uniform permutation of $[n] = \{1, 2, \dotsc, n\}$.

Given the Poisson process, the level sets of the function \(\mathbf v\mapsto\mrL_{\mathbf0,\mathbf v}\) partition the positive quadrant. It is easy to see that the boundaries between these level sets are formed by infinite upwards and rightwards rays emitted from the Poisson points, mutually annihilating further upwards and rightwards from the locations where they get in contact with rays emitted from other Poisson points. We call these contact-and-annihilation points \emph{corners}.

The $t$-PNG model has a very similar interpretation. The only modification needed to the above is that when when two rays meet they either annihilate (as in Hammersley's process) and a corner point is formed with probability $1-t$, or both rays continue with no annihilation and a crossing point is formed instead with probability $t$. Subsequent contacts with further rays emitted from other Poisson points can again result in annihilation and hence the creation of corner points, or crossings with no annihilation. Given the Poisson configuration, the decision of forming a corner vs.\ a crossing point is always made independently each time with respective probabilities \(1-t\) and \(t\).

The droplet case can be generalised to accommodate nucleations outside the light cone $\{|x|<\tau\}$. It is convenient to keep and rotate this cone as before, and use the same Poisson points on \(\mathbb R_{\ge0}^2\) as well. The effect of nucleations outside this cone will manifest as \emph{sources} and \emph{sinks}. These are upwards and rightwards rays entering the south and west boundaries respectively of \(\mathbb R_{\ge0}^2\), and forming corners or crossing points (for \(t\)-PNG) the same way as rays emitted from Poisson points in \(\mathbb R_{\ge0}^2\) do. We also get similar boundary effects if we restrict our view of Hammersley's process in the quadrant to a rectangle $S_{\mathbf u,\mathbf v}$.

Considering these boundary effects in Hammersley's process allowed the construction of a stationary version of the model by Cator and Groeneboom \cite{Cato_Groe_05}. In this case the sources and sinks follow Poisson processes on the south and west boundaries of \(\mathbb R_{\ge0}^2\) with specific intensities. Stationarity occurs in the sense that restricting the view into a smaller box $S_{\mathbf u,\mathbf v}$ will give rise to sources and sinks for this box with the same boundary Poisson law, independently of the 2-dimensional Poisson points in the bulk of the box.

Stationarity was subsequently used by the same authors to give a probabilistic coupling proof that Hammersley's process (and therefore PNG) belongs to the KPZ universality class \cite{Cato_Groe_06}. This seminal work was the starting point for a long sequence of probabilistic treatment of KPZ universality in many related models -- of the numerous references we cite \cite{Bala_Cato_Sepp_06,Bala_Sepp_09,Sepp_12_corr,Emra_Janj_Sepp_23}.

Recently, Drillick and Lin \cite[Theorem 1.8]{Dril_Lin_24} constructed the stationary version of $t$-PNG. Somewhat similarly to Hammersley's process, stationarity happens when the sources and sinks form Poisson processes with respective intensities $\lambda$ and $\frac{1}{\lambda(1-t)}$ for some $\lambda>0$. 

We finally mention yet another interpretation of Hammersley's process due to Aldous and Diaconis \cite{Aldo_Diac_95}. They studied the following continuous-time interacting particle system formulation. Particles are laid down as disjoint points in \(\mathbb R_{\ge0}\). Between any two neighbouring points, and between the leftmost point and the origin, there is a Poisson clock with rate equal the distance of these neighbours. When the clock rings, a uniform point \(U\) is chosen in the interval between the neighbours, and the right neighbour is instantaneously moved to this uniform location. The space-time trajectories of the particles exactly draw the ray ensembles in \(\mathbb R_{\ge0}^2\) with the uniform targets of particle jumps being the 2-dimensional Poisson points of Hammersley's process. The length \(\mrL_{\mathbf0,\mathbf v}\) of the longest chain is obtained as the number of particles in the interval \([(0,v_2),\,\mathbf v]\) between space coordinates 0 and \(v_1\) at time \(v_2\).

One of the properties of Hammersley's and other growth processes is convergence to the stationary picture. This is closely related to \emph{Busemann functions}, considered in the Hammersley process context by Aldous and Diaconis \cite{Aldo_Diac_95} as well as Cator and Pimentel \cite{Cato_Pime_2012_Busemann}. It can be shown that if \(\mathbf v\to\infty\) in a given direction then, 
for any \(\mathbf u\) and \(\widehat{\mathbf u}\), the difference
$
 \mrL_{\mathbf u,\mathbf v}-\mrL_{\widehat{\mathbf u},\mathbf v}
$
converges a.s., and the limit is called the Busemann function \(B(\mathbf u,\,\widehat{\mathbf u})\) of the two points \(\mathbf u\) and \(\widehat{\mathbf u}\). This happens because the chains achieving maximal point count eventually coalesce as they run towards the distant point \(\mathbf v\). Busemann functions have also been investigated in \emph{first passage percolation} (Hoffman \cite{Hoff_2005_coex}), \emph{last passage percolation} (LPP) (Cator and Pimentel \cite{Cato_Pime_2012_Busemann}, Georgiou, Rassoul-Agha, Seppäläinen \cite{Geo_Rass_Sepp_2017_cocy}) and \emph{directed polymer models} (Georgiou, Rassoul-Agha, Seppäläinen, Yilmaz \cite{Geor_Rass_Sepp_Yilm_15}, Janjigian, Rassoul-Agha \cite{Janj_Rass_20}). Coalescence properties of the geodesics was the main tool to show stabilisation of Hammersley's process by Aldous and Diaconis \cite{Aldo_Diac_95} and of LPP by Bal\'azs, Busani and Sepp\"al\"ainen \cite{bal_bus_sepp_2021_stabi} namely, the last passage time differences from the origin to distant points in a given direction jointly converge to those coming from a stationary LPP model. For further discussion on Busemann functions, we refer to the surveys \cite{Auff_Damr_Hans_17, Rass_18, Sepp_18_CGM}. 

\subsection{Methods and results} We establish the local convergence in $t$-PNG as conjectured by Drillick and Lin \cite[Remark 1.4]{Dril_Lin_24}. That is, we show that the $t$-PNG model with zero-boundary converges in the local limit to a stationary $t$-PNG model of parameter $\lambda>0$ that depends explicitly on the direction of convergence. For $t = 0$, the conjecture specializes to a result of Aldous and Diaconis \cite{Aldo_Diac_95}. We do not explicitly investigate Busemann functions for \(t\)-PNG; this is left for future work.

Our proof makes use of a triple coupling $(\omega, \eta, \alpha)$ of $t$-PNG processes parametrized by a positive number $\varepsilon$.
\begin{itemize}
    \item $\omega$ is a $t$-PNG with Poisson nucleations of rate $\lambda+\varepsilon$ on the bottom boundary and a zero left boundary condition.
    \item $\eta$ is a stationary $t$-PNG of parameter $\lambda+\varepsilon$.
    \item $\alpha$ is a stationary $t$-PNG of parameter $\lambda$.
\end{itemize}
These can be coupled monotonically, i.e.\ so that $\omega \ge \eta \ge \alpha$. Here, for any two $t$-PNG processes, say $\phi$ and $\psi$, $\phi \le \psi$ if the union of all vertical segments of $\phi$ is a subset of the union of all vertical segments of $\psi$, and the union of all horizontal segments of $\psi$ is a subset of the union of all horizontal segments of $\phi$. This allows us to consider second class particle processes between $\omega$ and $\eta$ (denoted as $\frac{\omega}{\eta}$) and between $\omega$ and $\alpha$ (denoted as $\frac{\omega}{\alpha}$). These processes are also ordered: $\frac{\omega}{\eta} \le \frac{\omega}{\alpha}$. Since $\eta$ is stationary, its increments are distributed as Poisson processes. In a given direction, the convergence of increments of $\omega$ to Poisson processes in distribution is obtained by bounding the impact of the second class particles $\frac{\omega}{\eta}$, with a suitably chosen parameter $\lambda$. For this, we notice that each of these second-class particles starts from the left boundary and moves together with some particle of $\frac{\omega}{\alpha}$, jumping to the next or previous $\frac{\omega}{\alpha}$-particle according to a process equivalent to a discrete-time ASEP. We then conclude the tightness of the right tail of the rightmost particle's position in this ASEP by coupling the model with another ASEP with a stationary blocking measure. This implies that the probability of the increment of $\omega$ in the characteristic direction being impacted by second-class particles goes to $0$. Once the limit distribution of the increment of $\omega$ is obtained, we let $\varepsilon \to 0$, which finishes the proof of the theorem. Coupling of second class particles via reversible stationary structures like blocking measures was previously used in the particle systems context by Ferrari, Kipnis and Saada \cite{fks_91}, and also as a \emph{microscopic concavity} property \cite{Bala_Sepp_09}.

\subsection{Organization of the paper}

The rest of this article is organized as follows. Section \ref{S:Model} discusses the $t$-PNG model with boundary nucleations, including the stationary case. Section \ref{S:2CP} covers the monotone coupling of two $t$-PNG models and the associated second-class particles that represent the discrepancy between the two models. Section \ref{S:Height} computes the law of large numbers limit of the height functions of the stationary $t$-PNG as well as the $t$-PNG with one-sided boundary. Section \ref{S:LLN_2CP} states and proves Theorem \ref{th:2nd-class-lln}, which computes the asymptotic direction of a tagged second-class particle between  the stationary $t$-PNG and the $t$-PNG with one-sided boundary. This result is one of the main ingredients for our proof of local convergence, which is formulated as Theorem \ref{th:LocConv}. To establish this result, Section \ref{S:PfLocConv} also develops the previously mentioned $(\omega, \eta, \alpha)$-coupling and records a key geometric tail bound on the labels of the $\frac{\omega}{\alpha}$-particles that share their trajectory with the first $\frac{\omega}{\eta}$-particle. 

\subsection{Acknowledgements}
R.B.\ and A.B.\ were supported by the School of Mathematics of the University of Bristol and the Heilbronn Institute for Mathematical Research. M.B.\ and E.E.\ were supported by the EPSRC grant EP/W032112/1. This study did not involve any underlying data. 

\section{$t$-PNG with sources and sinks}
\label{S:Model}

While the classical case of $t$-PNG model involves a Poisson point process of nucleations on $\bbR^2_{>0}$, it is particularly interesting to consider its augmentations with additional nucleations on the bottom and left boundaries. It is reasonable to study constructions where these boundary nucleations form Poisson processes on the boundaries.

For $b,l \ge 0$, denote as $t\text{-PNG}(b, l)$ the $t$-PNG model with additional boundary nucleations such that:
\begin{itemize}
    \item there is a Poisson process of boundary nucleations on the bottom boundary of rate $b$,
    \item there is a Poisson process of boundary nucleations on the left boundary of rate $l$,
    \item these two Poisson processes and the Poisson process of nucleations in the bulk are jointly independent. 
\end{itemize}

As shown in \cite[Theorem 1.8]{Dril_Lin_24}, a $t$-PNG model is stationary if and only if it is of form $t\text{-PNG}(\lambda, \frac{1}{\lambda(1-t)})$ for some $\lambda > 0$.

Many processes, including $t$-PNG and further defined second class particle processes, can be viewed as interacting particle systems on $\bbR_{\ge 0}$, where the particles disappear from the system if they pass through the left boundary. In this case, ``time" refers to the ordinate. We write $\zeta_\tau$ for the set of coordinates of the particles of process $\zeta$ at time $\tau$. It is natural to agree on the right-continuity of each particle's position as a function of time. Thus, if a single jump to coordinate $x$ takes place in the system at time $\tau$, $\zeta_\tau$ would contain $x$. In some sense, $\zeta_\tau$ represents the horizontal slice of $\zeta$ at the ordinate $\tau$.

We write $\tilde{\zeta}$ for an interacting particle system obtained from $\zeta$ by diagonal reflection, that is, interpreting its diagram on $\bbR^2_{\ge 0}$ with time going from left to right, rather than from bottom to top. $\tilde{\zeta}_\tau$ is defined similarly to the above and represents the vertical slice of $\zeta$ at the abscissa $\tau$. Note that, under this notation, if $\zeta$ is a $t$-PNG process, $\zeta_0$ is the source set of $\zeta$, and $\tilde{\zeta}_0$ is the sink set. 

When $Q$ is a particle of an interacting particle system associated with the $t$-PNG diagram, we write $Q_\tau$ for its horizontal coordinate at time $\tau$, viewed as a right-continuous function of $\tau$. If $Q$ is a particle of $t$-PNG processes, its path is up-left, that is, it never moves to the right. In contrast, in second class particle processes, the paths of particles are up-right. If $Q$ is a particle of such process, we write $Q(\sigma) : \bbR_{\ge 0} \mapsto \bbR^2_{\ge 0}$ for an injective continuous parameterization of the path of $Q$ such that $Q(0)$ is the point where $Q$ enters the system (in our case, either on the bottom or on the left boundary), and for every point $\bfv$ where the paths of two particles $Q$ and $R$ meet, there is a unique $\sigma_\bfv$ for which $Q(\sigma_\bfv) = R(\sigma_\bfv) = \bfv$. An example of a suitable parameterization would be the function that maps $\sigma$ to the unique point on the path of $Q$ with the product of coordinates equal to $\sigma$. It is well-defined for each up-right path with a unique intersection point with the boundary. This notation will be extensively used in the final section.

\section{Second class particles}
\label{S:2CP}

It is often useful to be able to couple two or more $t$-PNG models with different boundary data, maintaining a certain order between them. This is where the notion of second class particles between two models is of great help. This section is dedicated to the definition and description of second class particles.

We now introduce a partial order on $t$-PNG processes with arbitrary boundary data. For $t$-PNG processes $\phi$ and $\psi$, we write $\phi \le \psi$ if 
\begin{align*}
\phi_\tau \subseteq \psi_\tau \text{ and } \tilde{\phi}_\tau \supseteq \tilde{\psi}_\tau \text{ at every time } \tau \in \bbR_{\ge 0}.   
\end{align*}
This means that all vertical segments of $\phi$ are covered by vertical segments of $\psi$, and all horizontal segments of $\psi$ are covered by horizontal segments of $\phi$. With this ordering, it is possible to couple $t$-PNG processes monotonically, given that the boundary conditions are consistent with such monotonicity.

\begin{lem}
\label{t-PNG coupling}
    Let $\phi$ and $\psi$ be $t$-PNG models with fixed bulk nucleations, sources $\phi_0$, $\psi_0$ and sinks $\tilde{\phi}_0$, $\tilde{\psi}_0$, respectively. Suppose $\phi_0 \subseteq \psi_0$ and $\tilde{\phi}_0 \supseteq \tilde{\psi}_0$. Then there exists a monotone coupling of $\phi$ and $\psi$, i.e. such that $\phi \le \psi$. 
\end{lem}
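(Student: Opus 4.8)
The plan is to construct the monotone coupling explicitly by running the two $t$-PNG dynamics on a common probability space, using the same underlying randomness for the bulk nucleations and the same random corner-vs-crossing coin flips, and then verifying by induction on the sequence of ``events'' (nucleations and ray collisions, ordered by the product of coordinates, say) that the partial order $\phi \le \psi$ is preserved. Recall that $\phi \le \psi$ means every vertical (up-going) ray segment of $\phi$ sits inside a vertical segment of $\psi$, and every horizontal (right-going) ray segment of $\psi$ sits inside a horizontal segment of $\phi$; equivalently, in the particle-system picture on $\bbR_{\ge 0}$, the particle configuration $\phi_\tau \subseteq \psi_\tau$ for all $\tau$, and dually $\tilde\phi_\tau \supseteq \tilde\psi_\tau$. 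So the coupling is: sample one bulk Poisson process, one collection of i.i.d.\ corner/crossing decisions indexed by (potential) collision sites, and the two boundary configurations $(\phi_0,\tilde\phi_0)$, $(\psi_0,\tilde\psi_0)$ subject to $\phi_0\subseteq\psi_0$, $\tilde\phi_0\supseteq\tilde\psi_0$; then let both processes evolve deterministically off this data.

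The core of the argument is a local/inductive invariant. Process the events in order of increasing ``radius'' $\sigma = x_1 x_2$ (or any sweep that respects the up-right causal structure of both diagrams). Maintain the inductive hypothesis that, on the already-processed region, the set of $\phi$-vertical segments is contained in the set of $\psi$-vertical segments and the set of $\psi$-horizontal segments is contained in the set of $\phi$-horizontal segments. At each new event I check the finitely many local reconfigurations. A \emph{bulk nucleation} at $\bfp$ emits an up-ray and a right-ray in both $\phi$ and $\psi$ identically, so it cannot break either containment. A \emph{boundary nucleation} is present in $\psi$ whenever it is present in $\phi$ on the bottom (since $\phi_0\subseteq\psi_0$), and present in $\phi$ whenever present in $\psi$ on the left (since $\tilde\phi_0\supseteq\tilde\psi_0$) — exactly the asymmetry that the partial order tolerates. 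The delicate case is a \emph{collision}: at a point where a $\phi$-up-ray meets a $\phi$-right-ray, by the induction hypothesis there is a $\psi$-up-ray and a $\psi$-right-ray covering them, so $\psi$ also has a collision there; we use the \emph{same} coin to decide corner-vs-crossing, so $\phi$ and $\psi$ make the same choice and the local pictures remain nested. One must also handle ``one-sided'' collisions, where $\psi$ has a collision at a site but $\phi$ has, say, only the up-ray (its matching right-ray having been annihilated earlier) or no ray at all; here one checks that whatever $\psi$ does — annihilate into a corner or cross — the $\phi$-side segments that need covering are still covered, because a corner in $\psi$ only shortens a $\psi$-vertical segment (fine, $\phi$'s is shorter) and lengthens/continues a $\psi$-horizontal one (fine, $\phi$'s horizontal is at least as long there), and a crossing in $\psi$ changes nothing. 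A bookkeeping-friendly way to organize this is to phrase everything in the interacting-particle picture of Section~\ref{S:Model}: show $\phi_\tau\subseteq\psi_\tau$ is preserved under each elementary update, using that a $\phi$-particle and the ``extra'' $\psi$-particles move consistently, and that the second-class-particle structure (to be developed in this section) is precisely what records the discrepancy.

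**The main obstacle** I anticipate is not any single case but making the ``sweep and induct'' rigorous when infinitely many events accumulate: one needs that in any bounded rectangle $S_{\zero,\bfv}$ only finitely many rays and collisions occur a.s.\ (true because the bulk Poisson process is locally finite and each nucleation contributes boundedly many ray-segments before leaving the box), so that the inductive construction is well-defined and the limiting segment configuration is measurable. A secondary subtlety is choosing the index set for the shared corner/crossing coins so that it is rich enough to cover every collision site that \emph{either} process can realize, while the coins remain i.i.d.\ and independent of the Poisson data — this is what legitimizes ``use the same coin'' above. Once the finiteness and the coin-indexing are set up cleanly, the verification that $\phi\le\psi$ is preserved reduces to the short finite case-check sketched above, and taking the union over an exhausting sequence of rectangles yields the monotone coupling on all of $\bbR_{\ge0}^2$.
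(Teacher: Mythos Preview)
Your overall strategy—shared bulk Poisson points, shared coin flips at collision sites, and an inductive sweep verifying the order is preserved—is sound and is, at bottom, what the paper does: the paper invokes the coloured construction of Drillick--Lin \cite[Lemma~4.3(i), Appendix~A]{Dril_Lin_24}, in which colour~1 carries $\phi$ and colour~2 carries the discrepancy $\psi\setminus\phi$; that construction \emph{is} a shared-randomness coupling, packaged so that the invariant (colour-2 horizontals lie inside colour-1 horizontals, colour-2 verticals lie outside colour-1 verticals) is built into the local rules rather than verified by a case-check.

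However, your case analysis contains a genuine error stemming from the asymmetry in the definition of $\phi\le\psi$. You write that at a $\phi$-collision ``there is a $\psi$-up-ray and a $\psi$-right-ray covering them, so $\psi$ also has a collision there.'' This is false: the induction hypothesis gives $\phi$-vertical $\subseteq$ $\psi$-vertical but $\psi$-horizontal $\subseteq$ $\phi$-horizontal, so a $\phi$-right-ray need \emph{not} be covered by a $\psi$-right-ray. Hence at a $\phi$-collision point, $\psi$ is guaranteed an up-ray but may have no right-ray, and thus no collision. Symmetrically, at a $\psi$-collision point, $\phi$ is guaranteed a right-ray but may lack the up-ray. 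Your description of the one-sided case—``$\psi$ has a collision at a site but $\phi$ has, say, only the up-ray \ldots\ or no ray at all''—is therefore impossible: if $\psi$ has a right-ray then so does $\phi$. The actual one-sided cases are (i) $\phi$ collides, $\psi$ has only its up-ray; (ii) $\psi$ collides, $\phi$ has only its right-ray. In both cases the non-colliding process simply passes through, and one checks that either outcome of the coin flip in the colliding process preserves both containments; the shared-coin mechanism is invoked only in the two-sided case. Once you redo the local check with the containments oriented correctly, your argument goes through; the marginals are correct because at every collision of either process a fresh Bernoulli($t$) coin is consumed, and whether that coin is also used by the other process is determined by the past and hence independent of the coin itself.
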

\begin{proof}
    A similar result was proven by Drillick and Lin \cite[Lemma 4.3 (i)]{Dril_Lin_24}. Following their notation, we colour sources $\phi_0$ and sinks $\tilde{\phi}_0$ with colour 1 and colour sources $\psi_0 \setminus \phi_0$ and sinks $\tilde{\phi}_0 \setminus \tilde{\psi}_0$ with colour 2; note that sinks $\tilde{\phi}_0 \setminus \tilde{\psi}_0$ will be coloured twice. Then, we sample $\phi$ and $\psi$ according to these colours by therein defined rules. The following invariant property is propagated from the boundary: all horizontal segments of colour 2 are also coloured with colour 1, and all vertical segments of colour 2 are not coloured with colour 1; see \cite[Appendix A]{Dril_Lin_24}. This means
    \begin{itemize}
        \item colour 2 never erases vertical segments of colour 1, which implies $\phi_\tau \subseteq \psi_\tau$ for every $\tau \ge 0$;
        \item colour 2 never creates new horizontal segments; it only erases those of colour 1. This implies $\tilde{\phi}_\tau \supseteq \tilde{\psi}_\tau$ for every $\tau \ge 0$.
    \end{itemize}
    The needed coupling is found.
\end{proof}

\begin{cor}\label{cor:poi-coupling}
    Let $\phi \sim t\text{-\rm{PNG}}(b_\phi, l_\phi)$ and $\psi \sim t\text{-\rm{PNG}}(b_\psi, l_\psi)$ with $b_\phi \le b_\psi$ and $l_\phi \ge l_\psi$. Then there exists a monotone coupling of $\phi$ and $\psi$ such that $\phi \le \psi$.
\end{cor}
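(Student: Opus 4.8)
The plan is to deduce this from Lemma \ref{t-PNG coupling}, whose hypotheses concern \emph{fixed} bulk nucleations and the inclusions $\phi_0 \subseteq \psi_0$, $\tilde\phi_0 \supseteq \tilde\psi_0$ on the boundary data. So the first task is to produce a coupling of the underlying Poisson data of $\phi$ and $\psi$ that realizes exactly these inclusions while preserving the prescribed marginal laws, and the second is to run the lemma conditionally on that data.

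Recall that for $\zeta \sim t\text{-PNG}(b, l)$ the source set $\zeta_0$ is a rate-$b$ Poisson process on the bottom boundary, the sink set $\tilde\zeta_0$ is a rate-$l$ Poisson process on the left boundary, and the bulk nucleations form an independent Poisson process on $\bbR^2_{>0}$; the three are jointly independent. I would couple these as follows. Use a single realization $\Pi$ of the bulk nucleation process for both $\phi$ and $\psi$. For the bottom boundary, since $b_\phi \le b_\psi$, take a rate-$b_\phi$ Poisson process $B$ and an independent rate-$(b_\psi - b_\phi)$ Poisson process $B'$ on the bottom boundary, and declare the bottom sources of $\phi$ to be $B$ and those of $\psi$ to be $B \cup B'$; then $\phi_0 \subseteq \psi_0$ almost surely, with the correct Poisson marginals by superposition. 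Symmetrically, since $l_\phi \ge l_\psi$, take a rate-$l_\psi$ Poisson process $L$ and an independent rate-$(l_\phi - l_\psi)$ Poisson process $L'$ on the left boundary, and declare the left sinks of $\psi$ to be $L$ and those of $\phi$ to be $L \cup L'$; then $\tilde\phi_0 \supseteq \tilde\psi_0$ almost surely, again with the correct marginals. Make $\Pi, B, B', L, L'$ jointly independent.

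On the almost-sure event that the realized data satisfies $\phi_0 \subseteq \psi_0$, $\tilde\phi_0 \supseteq \tilde\psi_0$ and shares the bulk nucleations $\Pi$, Lemma \ref{t-PNG coupling} supplies a monotone coupling of $\phi$ and $\psi$, coupling the remaining corner-versus-crossing decisions. Mixing this conditional coupling over the law of $(\Pi, B, B', L, L')$ constructed above yields a coupling of $\phi$ and $\psi$ with the prescribed marginals and with $\phi \le \psi$ holding almost surely. Measurability of the conditional coupling in the data is routine — it can be arranged, for instance, via the explicit colouring construction used in the proof of Lemma \ref{t-PNG coupling}.

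I do not expect a genuine obstacle here: the argument is a soft combination of Poisson superposition and a mixture of couplings built on top of Lemma \ref{t-PNG coupling}. The one point worth stating carefully is that the rate inequalities must be matched to the correct direction in the partial order — a larger bottom rate produces more sources (covariant in $\le$), while a larger left rate produces more sinks, which enters $\le$ \emph{contravariantly} through $\tilde\phi_0 \supseteq \tilde\psi_0$ — and this is precisely what the hypotheses $b_\phi \le b_\psi$ and $l_\phi \ge l_\psi$ encode.
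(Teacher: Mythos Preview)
Your proposal is correct and follows essentially the same route as the paper: couple the bulk nucleations to be identical, couple the boundary Poisson processes so that the required inclusions $\phi_0\subseteq\psi_0$ and $\tilde\phi_0\supseteq\tilde\psi_0$ hold almost surely, and then invoke Lemma~\ref{t-PNG coupling}. The only cosmetic difference is that the paper phrases the boundary coupling via thinning (e.g., sample $\phi_0$ as a $b_\phi/b_\psi$-thinning of $\psi_0$) whereas you phrase it via superposition, which is the same coupling viewed from the other side.
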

\begin{proof}
    The bulk nucleations will be coupled in order to be the same. We sample $\phi_0$, the Poisson process of bottom boundary nucleations of $\phi$, as a thinning of $\psi_0$. Similarly, $\tilde{\psi}_0$ shall be sampled as a thinning of $\tilde{\phi}_0$. This guarantees $\phi_0 \subseteq \psi_0$ and $\tilde{\phi}_0 \supseteq \tilde{\psi}_0$, hence the required coupling exists by Lemma \ref{t-PNG coupling}.
\end{proof}

When comparing $t$-PNG processes with common bulk nucleations, it is convenient to work with the concept of second class particles, which is already extensively studied for $0$-PNG; see Cator and Groeneboom \cite{Cato_Groe_05} and \cite{Cato_Groe_06}. Informally speaking, second class particles of a pair of monotonically coupled processes characterize the difference between these processes. As noted in \cite{Dril_Lin_24}, the lines of colour 2 in the proof of Lemma \ref{t-PNG coupling} can be viewed as space-time paths of second class particles between $\psi$ and $\phi$. We will now describe the behaviour of these particles.

Suppose $\phi$, $\psi$ are $t$-PNG processes coupled so that $\phi \le \psi$. We define the process of \textit{second class particles} $\frac{\psi}{\phi}$ on the $t$-PNG diagram so that 
\begin{align*}
\left(\frac{\psi}{\phi}\right)_\tau = \psi_\tau \setminus \phi_\tau \text{ and } \widetilde{\left(\frac{\psi}{\phi}\right)}_\tau = \tilde{\phi}_\tau \setminus \tilde{\psi}_\tau \text{ for every } \tau \ge 0.    
\end{align*}
These are some of its properties.
\begin{itemize}
    \item[--] Every $\frac{\psi}{\phi}$-particle moves up-right along some space-time path consisting of vertical and horizontal segments.
    \item[--] Every $\frac{\psi}{\phi}$-particle moves together with some associated $\psi$-particle on all vertical segments of its space-time path, and together with some associated $\phi$-particle on all horizontal segments. For each $\frac{\psi}{\phi}$-particle, its associated $\phi$- and $\psi$-particles can change upon meeting some other $\phi$- or $\psi$-particle.
    \item[--] $\frac{\psi}{\phi}$-particles start their movement vertically from the bottom boundary, with $\left(\frac{\psi}{\phi}\right)_0 = \psi_0 \setminus \phi_0$, or horizontally from the left boundary, with $\widetilde{\left(\frac{\psi}{\phi}\right)}_0 = \tilde{\phi}_0 \setminus \tilde{\psi}_0$.
\end{itemize}
In our coupling, we make the following choice for the purposes of second class particle enumeration:
\begin{itemize}
    \item[--] When two $\frac{\psi}{\phi}$-particles, one moving vertically and one horizontally, meet, each of them changes direction (Figure \ref{fig:scp-turns}.a). This guarantees that space-time paths of second class particles never cross, hence the up-down and left-right ordering of second class particles on the $t$-PNG diagram is preserved. 
\end{itemize}
In fact, for fixed $\phi$ and fixed set $N$ of boundary nucleations of $\psi$, consistent with $\phi \le \psi$, the process of second class particles can be used for sampling $\psi$ under the conditions that $\phi \le \psi$ and $\psi_0 \cup \tilde{\psi}_0 = N$. Apart from collisions with other $\frac{\psi}{\phi}$-particles (Figure \ref{fig:scp-turns}.a), the path of each $\frac{\psi}{\phi}$-particle will be sampled as follows: 
\begin{itemize}
    \item A horizontally moving $\frac{\psi}{\phi}$-particle always turns up when it meets a corner point of $\phi$, that is, the end of the horizontal segment of $\phi$ space-time path it moves along (Figure \ref{fig:scp-turns}.b). However, if it meets a crossing point of \(\phi\), it will continue straight across it (Figure \ref{fig:scp-turns}.c).
    \item A vertically moving $\frac{\psi}{\phi}$-particle turns right with probability $1-t$ when it meets a horizontally moving $\phi$-particle, associating itself with this $\phi$-particle (Figure \ref{fig:scp-turns}.d). With probability $t$, it does not turn and keeps moving vertically (Figure \ref{fig:scp-turns}.e). Note that the $\phi$-particle does not turn up upon this encounter, since the $\frac{\psi}{\phi}$-particle is not associated with any $\phi$-particles when it moves vertically.
\end{itemize}

\begin{figure}[ht]
 \begin{center}
  \begin{tikzpicture}[scale=0.64]
   \begin{scope}[xshift=0cm]
    \draw(0,0)--(4,0)node[midway,below=8pt]{(a)}--(4,4)--(0,4)--cycle;
    \draw[red,very thick](4,2)--(0,2);
    \draw[blue,very thick](2,0)--(2,4);
    \draw[black,very thick](2.2,0)--(2.2,1.8)--(4,1.8);
    \draw[black,very thick](0,2.2)--(1.8,2.2)--(1.8,4);
   \end{scope}
   \begin{scope}[xshift=5cm]
    \draw(0,0)--(4,0)node[midway,below=8pt]{(b)}--(4,4)--(0,4)--cycle;
    \draw[red,very thick](1.9,0)--(1.9,1.9)--(0,1.9);
    \draw[blue,very thick](2.1,0)--(2.1,4);
    \draw[black,very thick](0,2.1)--(1.9,2.1)--(1.9,4);
   \end{scope}
   \begin{scope}[xshift=10cm]
    \draw(0,0)--(4,0)node[midway,below=8pt]{(c)}--(4,4)--(0,4)--cycle;
    \draw[red,very thick](1.9,0)--(1.9,4);
    \draw[red,very thick](0,1.9)--(4,1.9);
    \draw[blue,very thick](2.1,0)--(2.1,4);
    \draw[black,very thick](0,2.1)--(4,2.1);
   \end{scope}
   \begin{scope}[xshift=15cm]
    \draw(0,0)--(4,0)node[midway,below=8pt]{(d)}--(4,4)--(0,4)--cycle;
    \draw[red,very thick](4,2.1)--(0,2.1);
    \draw[blue,very thick](1.9,0)--(1.9,1.9)--(0,1.9);
    \draw[black,very thick](2.1,0)--(2.1,1.9)--(4,1.9);
   \end{scope}
   \begin{scope}[xshift=20cm]
    \draw(0,0)--(4,0)node[midway,below=8pt]{(e)}--(4,4)--(0,4)--cycle;
    \draw[red,very thick](4,2.1)--(0,2.1);
    \draw[blue,very thick](0,1.9)--(4,1.9);
    \draw[blue,very thick](1.9,0)--(1.9,4);
    \draw[black,very thick](2.1,0)--(2.1,4);
   \end{scope}
  \end{tikzpicture}
  \caption{Possible local behaviours of second class particles. Here and further, $\phi$-particles are red, $\psi$-particles are blue, and $\frac{\psi}{\phi}$-particles are black.}\label{fig:scp-turns}
 \end{center}
\end{figure}
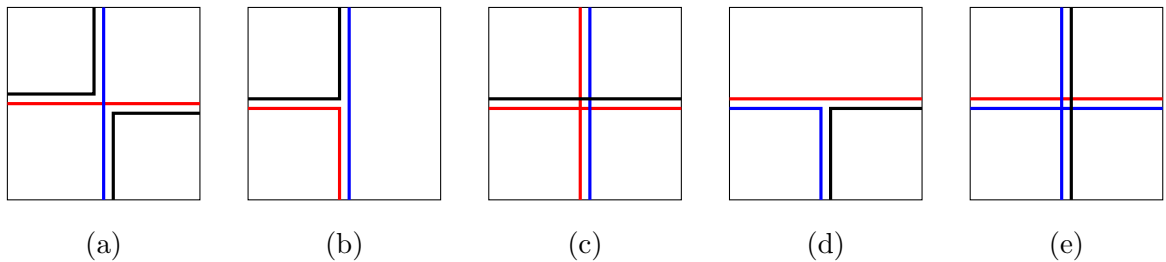

Label second class particles with integers, ordering them by their starting points on the boundary, with decreasing non-positive integers from bottom to top along the left boundary and increasing positive integers from left to right along the bottom boundary; thus, the lowest particle to start from the left boundary is indexed by $0$. As noted above, this ordering is invariant, that is, for every $i < j$ the entire path of particle $j$ lies below and to the right of the path of particle $i$, with the exception of the meeting points shown in Figure \ref{fig:scp-turns}.a in case $j=i+1$.

See Figure \ref{fig:config} for a possible configuration of $\phi$, $\psi$, and $\frac{\psi}{\phi}$, with the labelling of $\frac{\psi}{\phi}$.

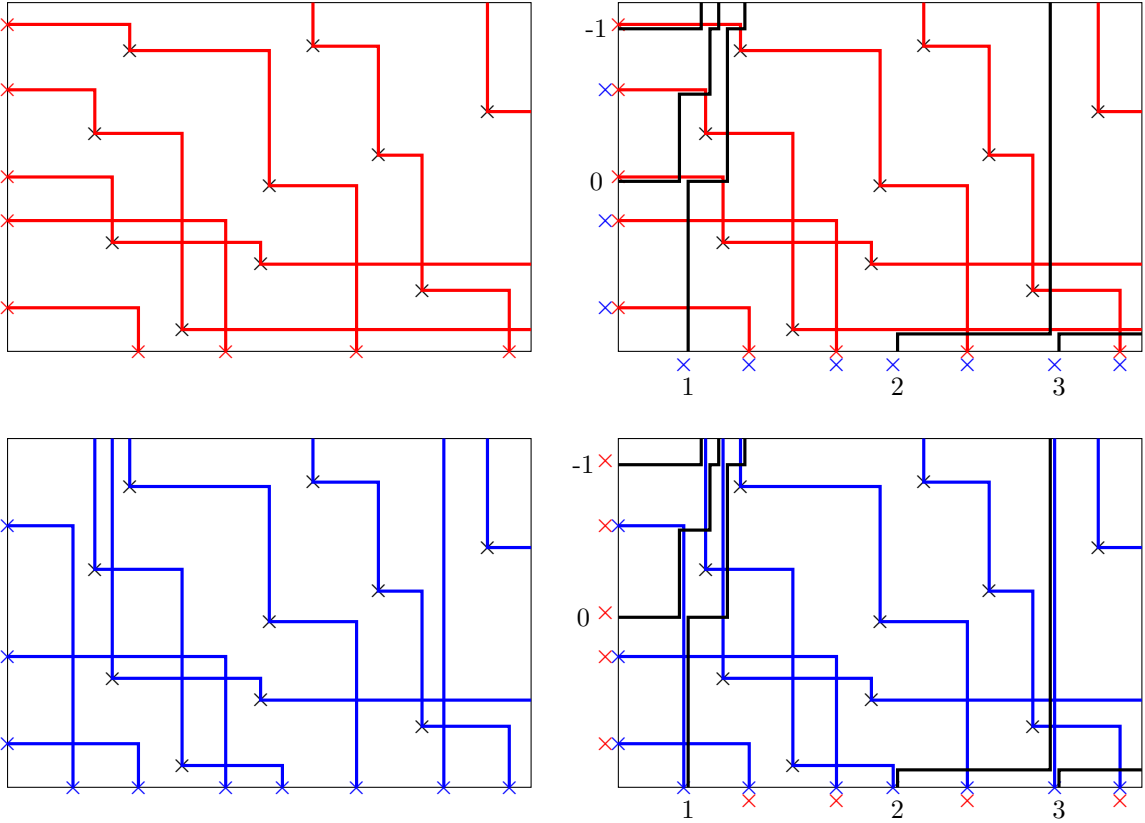
\begin{figure}[ht]
 \begin{center}
  \begin{tikzpicture}[scale=0.58]
  \begin{scope}
   \draw(0,0)--(12,0)--(12,8)--(0,8)--cycle;
   \node[red] at (3, 0) {$\times$};
   \node[red] at (5, 0) {$\times$};
   \node[red] at (8, 0) {$\times$};
   \node[red] at (11.5, 0) {$\times$};
   
   \node[red] at (0, 1) {$\times$};
   \node[red] at (0, 3) {$\times$};
   \node[red] at (0, 4) {$\times$};
   \node[red] at (0, 6) {$\times$};
   \node[red] at (0, 7.5) {$\times$};

   \node at (2, 5) {$\times$};
   \node at (2.4, 2.5) {$\times$};
   \node at (2.8, 6.9) {$\times$};
   \node at (4, 0.5) {$\times$};
   \node at (5.8, 2) {$\times$};
   \node at (6, 3.8) {$\times$};
   \node at (7, 7) {$\times$};
   \node at (8.5, 4.5) {$\times$};
   \node at (9.5, 1.4) {$\times$};
   \node at (11, 5.5) {$\times$};

   \draw[red,very thick](12,0.5)--(4,0.5)--(4,5)--(2,5)--(2,6)--(0,6);
   \draw[red,very thick](12,2)--(5.8,2)--(5.8,2.5)--(2.4,2.5)--(2.4,4)--(0,4);
   \draw[red,very thick](8,0)--(8,3.8)--(6,3.8)--(6,6.9)--(2.8,6.9)--(2.8,7.5)--(0,7.5);
   \draw[red,very thick](3,0)--(3,1)--(0,1);
   \draw[red,very thick](5,0)--(5,3)--(0,3);
   \draw[red,very thick](11.5,0)--(11.5,1.4)--(9.5,1.4)--(9.5,4.5)--(8.5,4.5)--(8.5,7)--(7,7)--(7,8);
   \draw[red,very thick](12,5.5)--(11,5.5)--(11,8);
   \end{scope}
   \begin{scope}[xshift=14cm]
       \draw(0,0)--(12,0)--(12,8)--(0,8)--cycle;
   \node[blue] at (1.5, -0.3) {$\times$};
   \node[blue] at (3, -0.3) {$\times$};
   \node[red] at (3, 0) {$\times$};
   \node[blue] at (5, -0.3) {$\times$};
   \node[red] at (5, 0) {$\times$};
   \node[blue] at (6.3, -0.3) {$\times$};
   \node[blue] at (8, -0.3) {$\times$};
   \node[red] at (8, 0) {$\times$};
   \node[blue] at (10, -0.3) {$\times$};
   \node[blue] at (11.5, -0.3) {$\times$};
   \node[red] at (11.5, 0) {$\times$};
   
   \node[red] at (0, 1) {$\times$};
   \node[blue] at (-0.3, 1) {$\times$};
   \node[red] at (0, 3) {$\times$};
   \node[blue] at (-0.3, 3) {$\times$};
   \node[red] at (0, 4) {$\times$};
   \node[red] at (0, 6) {$\times$};
   \node[blue] at (-0.3, 6) {$\times$};
   \node[red] at (0, 7.5) {$\times$};

   \node at (2, 5) {$\times$};
   \node at (2.4, 2.5) {$\times$};
   \node at (2.8, 6.9) {$\times$};
   \node at (4, 0.5) {$\times$};
   \node at (5.8, 2) {$\times$};
   \node at (6, 3.8) {$\times$};
   \node at (7, 7) {$\times$};
   \node at (8.5, 4.5) {$\times$};
   \node at (9.5, 1.4) {$\times$};
   \node at (11, 5.5) {$\times$};

   \draw[red,very thick](12,0.5)--(4,0.5)--(4,5)--(2,5)--(2,6)--(0,6);
   \draw[red,very thick](12,2)--(5.8,2)--(5.8,2.5)--(2.4,2.5)--(2.4,4)--(0,4);
   \draw[red,very thick](8,0)--(8,3.8)--(6,3.8)--(6,6.9)--(2.8,6.9)--(2.8,7.5)--(0,7.5);
   \draw[red,very thick](3,0)--(3,1)--(0,1);
   \draw[red,very thick](5,0)--(5,3)--(0,3);
   \draw[red,very thick](11.5,0)--(11.5,1.4)--(9.5,1.4)--(9.5,4.5)--(8.5,4.5)--(8.5,7)--(7,7)--(7,8);
   \draw[red,very thick](12,5.5)--(11,5.5)--(11,8);

   \draw[black,very thick](0,7.4)--(1.9,7.4)--(1.9,8);
   \draw[black,very thick](0,3.9)--(1.4,3.9)--(1.4,5.9)--(2.1,5.9)--(2.1,7.4)--(2.3,7.4)--(2.3,8);
   \draw[black,very thick](1.6,0)--(1.6,3.9)--(2.5,3.9)--(2.5,7.4)--(2.9,7.4)--(2.9,8);
   \draw[black,very thick](6.4,0)--(6.4,0.4)--(9.9,0.4)--(9.9,8);
   \draw[black,very thick](10.1,0)--(10.1,0.4)--(12,0.4);
   \node[black] at (-0.5, 7.4) {\small-1};
   \node[black] at (-0.5, 3.9) {\small0};
   \node[black] at (1.6, -0.8) {\small1};
   \node[black] at (6.4, -0.8) {\small2};
   \node[black] at (10.1, -0.8) {\small3};
   \end{scope}
   \begin{scope}[yshift=-10cm]
       \draw(0,0)--(12,0)--(12,8)--(0,8)--cycle;
   \node[blue] at (1.5, 0) {$\times$};
   \node[blue] at (3, 0) {$\times$};
   \node[blue] at (5, 0) {$\times$};
   \node[blue] at (6.3, 0) {$\times$};
   \node[blue] at (8, 0) {$\times$};
   \node[blue] at (10, 0) {$\times$};
   \node[blue] at (11.5, 0) {$\times$};
   
   \node[blue] at (0, 1) {$\times$};
   \node[blue] at (0, 3) {$\times$};
   \node[blue] at (0, 6) {$\times$};
   
   \node at (2, 5) {$\times$};
   \node at (2.4, 2.5) {$\times$};
   \node at (2.8, 6.9) {$\times$};
   \node at (4, 0.5) {$\times$};
   \node at (5.8, 2) {$\times$};
   \node at (6, 3.8) {$\times$};
   \node at (7, 7) {$\times$};
   \node at (8.5, 4.5) {$\times$};
   \node at (9.5, 1.4) {$\times$};
   \node at (11, 5.5) {$\times$};

   \draw[blue,very thick](1.5,0)--(1.5,6)--(0,6);
   \draw[blue,very thick](6.3,0)--(6.3,0.5)--(4,0.5)--(4,5)--(2,5)--(2,8);
   \draw[blue,very thick](12,2)--(5.8,2)--(5.8,2.5)--(2.4,2.5)--(2.4,8);
   \draw[blue,very thick](8,0)--(8,3.8)--(6,3.8)--(6,6.9)--(2.8,6.9)--(2.8,8);
   \draw[blue,very thick](3,0)--(3,1)--(0,1);
   \draw[blue,very thick](5,0)--(5,3)--(0,3);
   \draw[blue,very thick](11.5,0)--(11.5,1.4)--(9.5,1.4)--(9.5,4.5)--(8.5,4.5)--(8.5,7)--(7,7)--(7,8);
   \draw[blue,very thick](12,5.5)--(11,5.5)--(11,8);
   \draw[blue,very thick](10,0)--(10,8);
   \end{scope}
   \begin{scope}[xshift=14cm,yshift=-10cm]
       \draw(0,0)--(12,0)--(12,8)--(0,8)--cycle;
   \node[blue] at (1.5, 0) {$\times$};
   \node[blue] at (3, 0) {$\times$};
   \node[red] at (3, -0.3) {$\times$};
   \node[blue] at (5, 0) {$\times$};
   \node[red] at (5, -0.3) {$\times$};
   \node[blue] at (6.3, 0) {$\times$};
   \node[blue] at (8, 0) {$\times$};
   \node[red] at (8, -0.3) {$\times$};
   \node[blue] at (10, 0) {$\times$};
   \node[blue] at (11.5, 0) {$\times$};
   \node[red] at (11.5, -0.3) {$\times$};
   
   \node[red] at (-0.3, 1) {$\times$};
   \node[blue] at (0, 1) {$\times$};
   \node[red] at (-0.3, 3) {$\times$};
   \node[blue] at (0, 3) {$\times$};
   \node[red] at (-0.3, 4) {$\times$};
   \node[red] at (-0.3, 6) {$\times$};
   \node[blue] at (0, 6) {$\times$};
   \node[red] at (-0.3, 7.5) {$\times$};
   
   \node at (2, 5) {$\times$};
   \node at (2.4, 2.5) {$\times$};
   \node at (2.8, 6.9) {$\times$};
   \node at (4, 0.5) {$\times$};
   \node at (5.8, 2) {$\times$};
   \node at (6, 3.8) {$\times$};
   \node at (7, 7) {$\times$};
   \node at (8.5, 4.5) {$\times$};
   \node at (9.5, 1.4) {$\times$};
   \node at (11, 5.5) {$\times$};

   \draw[blue,very thick](1.5,0)--(1.5,6)--(0,6);
   \draw[blue,very thick](6.3,0)--(6.3,0.5)--(4,0.5)--(4,5)--(2,5)--(2,8);
   \draw[blue,very thick](12,2)--(5.8,2)--(5.8,2.5)--(2.4,2.5)--(2.4,8);
   \draw[blue,very thick](8,0)--(8,3.8)--(6,3.8)--(6,6.9)--(2.8,6.9)--(2.8,8);
   \draw[blue,very thick](3,0)--(3,1)--(0,1);
   \draw[blue,very thick](5,0)--(5,3)--(0,3);
   \draw[blue,very thick](11.5,0)--(11.5,1.4)--(9.5,1.4)--(9.5,4.5)--(8.5,4.5)--(8.5,7)--(7,7)--(7,8);
   \draw[blue,very thick](12,5.5)--(11,5.5)--(11,8);
   \draw[blue,very thick](10,0)--(10,8);

   \draw[black,very thick](0,7.4)--(1.9,7.4)--(1.9,8);
   \draw[black,very thick](0,3.9)--(1.4,3.9)--(1.4,5.9)--(2.1,5.9)--(2.1,7.4)--(2.3,7.4)--(2.3,8);
   \draw[black,very thick](1.6,0)--(1.6,3.9)--(2.5,3.9)--(2.5,7.4)--(2.9,7.4)--(2.9,8);
   \draw[black,very thick](6.4,0)--(6.4,0.4)--(9.9,0.4)--(9.9,8);
   \draw[black,very thick](10.1,0)--(10.1,0.4)--(12,0.4);
   \node[black] at (-0.8, 7.4) {\small-1};
   \node[black] at (-0.8, 3.9) {\small0};
   \node[black] at (1.6, -0.5) {\small1};
   \node[black] at (6.4, -0.5) {\small2};
   \node[black] at (10.1, -0.5) {\small3};
   \end{scope}
  \end{tikzpicture}
 \end{center}
\caption{A possible configuration on a rectangle in the bottom left corner of $\bbR^2_{\ge 0}$, with the labelling of second class particles.}\label{fig:config}
\end{figure}

\section{Height function}
\label{S:Height}

This section states some useful facts about the \textit{height function} of a $t$-PNG process, previously defined in \cite{Dril_Lin_24}. We denote the height of a process $\zeta$ at a point $\bfv = (x,y)$ as $\mrL_\zeta(\bfv)$ (or $\mrL_\zeta(x, y)$). We recall that $\mrL_\zeta(\bfv)$ can be interpreted as the number of space-time paths of $\zeta$-particles that cross the closed straight line segment between the origin and $\bfv$. Note that the word ``closed" implies right-continuity of the height function along every up-right path.

Define the \textit{mean function} by $\mrM_\bfv(\lambda) \coloneq \bbE[\mrL_\zeta(\bfv)]$ where $\zeta \sim t\text{-PNG}(\lambda, \frac{1}{\lambda(1-t)})$ is a stationary $t$-PNG process with parameter $\lambda$. This function can be expressed as
\begin{align*}
    \mrM_\bfv(\lambda) = x\lambda + \frac{y}{\lambda(1-t)}
\end{align*}
as noted in the proof of \cite[Lemma 4.4]{Dril_Lin_24}. We also introduce $\lambda_\bfv = \sqrt{\dfrac{y}{x(1-t)}}$, which is the unique minimizer of $\mrM_\bfv$ on $\bbR_{\ge 0}$, and the \textit{shape function}
\begin{align*}
    \gamma_\bfv = \min_{\lambda \in \bbR_{\ge 0}} \mrM_\bfv(\lambda) = \mrM_\bfv(\lambda_\bfv) = 2\sqrt{\frac{xy}{1-t}}.
\end{align*}

The following result describes the limiting behaviour of the height function considered far from the origin in some fixed direction in a stationary process. In the statement, the notation $|\cdot|$ refers to the Euclidean norm of a vector. 

\begin{prop}[Strong law of large numbers for height in stationarity]\label{th:height-lln}\ \\
    Let $\zeta \sim t\text{-\rm{PNG}}(\lambda, \frac{1}{\lambda(1-t)})$. Fix $\bfv \in \bbR_{\ge 0}^2$ and let $\bfv_k \in \bbR_{\ge 0}^2$ for $k \in \bbZ_{>0}$ with $|\bfv_k| \stackrel{k \to \infty}{\to} \infty$ and $\frac{\bfv_k}{|\bfv_k|} \stackrel{k \to \infty}{\to} \bfv$. Then  
    \begin{align*}
        \frac{1}{|\bfv_k|} \mrL_\zeta(\bfv_k) \stackrel{\rm{a.s.}}{\to} \mrM_{\bfv}(\lambda) \quad \text{ as } k \to \infty.   
    \end{align*}
\end{prop}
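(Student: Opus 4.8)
\emph{Proof plan.} The plan is to run the classical ``stationary model $\Rightarrow$ shape theorem'' argument: combine the exact first moment $\bbE[\mrL_\zeta(\bfv)] = \mrM_\bfv(\lambda)$ recorded above with a variance bound of order $|\bfv|$, the monotonicity of the height function, and a subsequence-plus-sandwich argument.

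First I would set up the structural input. Fixing $\bfv = (x,y)$ and splitting the height along the ``L-path'' $\mathbf{0} \to (x,0) \to (x,y)$ gives $\mrL_\zeta(x,y) = \mrL_\zeta(x,0) + (\mrL_\zeta(x,y) - \mrL_\zeta(x,0))$. The first term, as a function of $x$, is the counting function of the source process of $\zeta$, hence a Poisson process of rate $\lambda$; in particular $\mrL_\zeta(x,0) \sim \Pois(\lambda x)$. For the second term I would invoke the stationarity theorem of Drillick and Lin \cite[Theorem 1.8]{Dril_Lin_24}: the restriction of $\zeta$ to $[x,\infty) \times \bbR_{\ge 0}$ is again a stationary $t$-PNG with the same parameters, so for each fixed $x$ the increment $r \mapsto \mrL_\zeta(x,r) - \mrL_\zeta(x,0)$ is the sink-counting function of that restricted process, i.e.\ a Poisson process of rate $\tfrac{1}{\lambda(1-t)}$; this is precisely the increment structure already exploited in the proof of \cite[Lemma 4.4]{Dril_Lin_24}. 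Thus $\mrL_\zeta(\bfv)$ is a sum of two (dependent) Poisson variables with means $\lambda x$ and $\tfrac{y}{\lambda(1-t)}$, so by Cauchy--Schwarz $\Var(\mrL_\zeta(\bfv)) \le \bigl(\sqrt{\lambda x} + \sqrt{y/(\lambda(1-t))}\bigr)^2 \le 2\,\mrM_\bfv(\lambda)$. I would also record that $\mrL_\zeta$ is non-decreasing in each coordinate, since a larger segment is crossed by at least as many $\zeta$-paths (equivalently, the level-set index does not decrease as $\bfv$ increases).

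Next, for a fixed rational direction $\bfu \in \bbQ_{>0}^2$, Chebyshev's inequality together with $\bbE[\mrL_\zeta(n\bfu)] = n\,\mrM_\bfu(\lambda)$ (recall $\mrM$ is linear in $\bfv$) gives $\bbP(|\mrL_\zeta(n\bfu) - n\,\mrM_\bfu(\lambda)| > \varepsilon n) \le 2\,\mrM_\bfu(\lambda)/(\varepsilon^2 n)$, which is summable along $n = k^2$; Borel--Cantelli then yields $k^{-2}\mrL_\zeta(k^2\bfu) \to \mrM_\bfu(\lambda)$ almost surely, and squeezing $s\bfu$ between $\lfloor\sqrt{s}\rfloor^2\bfu$ and $(\lfloor\sqrt{s}\rfloor+1)^2\bfu$ via monotonicity of $\mrL_\zeta$ (using $(k+1)^2/k^2 \to 1$) upgrades this to $s^{-1}\mrL_\zeta(s\bfu) \to \mrM_\bfu(\lambda)$ almost surely as the real parameter $s\to\infty$. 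Intersecting over the countable family $\bfu \in \bbQ_{>0}^2$ and adjoining the two axis directions $(1,0),(0,1)$ (where the claim is just the strong law of large numbers for the source and sink Poisson processes) produces one almost sure event $\Omega_0$ on which $s^{-1}\mrL_\zeta(s\bfu) \to \mrM_\bfu(\lambda)$ for all these reference directions. Finally, on $\Omega_0$ I would treat the given sequence by sandwiching: $\bfv$ is a unit vector (hence $\bfv \ne \mathbf{0}$), and for any $\eta > 0$ one can pick reference directions $\bfu^- \le \bfv \le \bfu^+$ coordinatewise (rational, or on an axis when a coordinate of $\bfv$ vanishes, in which case take that coordinate of $\bfu^-$ to be $0$) with $|\mrM_{\bfu^\pm}(\lambda) - \mrM_\bfv(\lambda)| \le \eta$, using continuity of $\bfu \mapsto \mrM_\bfu(\lambda) = \lambda u_1 + u_2/(\lambda(1-t))$; for large $k$ we then have $|\bfv_k|\bfu^- \le \bfv_k \le |\bfv_k|\bfu^+$ coordinatewise, so dividing $\mrL_\zeta(|\bfv_k|\bfu^-) \le \mrL_\zeta(\bfv_k) \le \mrL_\zeta(|\bfv_k|\bfu^+)$ by $|\bfv_k|$ and letting $k\to\infty$ gives $\mrM_{\bfu^-}(\lambda) \le \liminf |\bfv_k|^{-1}\mrL_\zeta(\bfv_k) \le \limsup |\bfv_k|^{-1}\mrL_\zeta(\bfv_k) \le \mrM_{\bfu^+}(\lambda)$, and $\eta \to 0$ pins the limit to $\mrM_\bfv(\lambda)$.

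The only step that is not routine is the structural input — that the vertical height increment $\mrL_\zeta(x,\cdot) - \mrL_\zeta(x,0)$ is a rate-$\tfrac{1}{\lambda(1-t)}$ Poisson process, equivalently that a stationary $t$-PNG restricted to a right half-plane is again a stationary $t$-PNG with the prescribed boundary law. This is exactly \cite[Theorem 1.8]{Dril_Lin_24}; granted that (and the mean formula), everything else is the standard ``exact mean $+$ diffusive variance $+$ monotone sandwich'' deduction.
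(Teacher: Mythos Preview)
Your proposal is correct, and the structural input is the same as the paper's: you both decompose $\mrL_\zeta(x,y)$ along the L-path into $\mrL_\zeta(x,0)\sim\Pois(\lambda x)$ and the vertical increment $\sim\Pois\bigl(y/(\lambda(1-t))\bigr)$, the latter justified by stationarity. The divergence is in how this Poisson structure is turned into a.s.\ convergence. The paper bounds each Poisson piece \emph{separately} via the explicit fourth central moment $m_4=\mu+3\mu^2$, which gives a $|\bfv_k|^{-4}$ tail estimate that is directly summable over the given sequence (after a harmless rescaling so that $x_k\ge kx$, $y_k\ge ky$); there is no monotone sandwich and no reference-direction argument. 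You instead take the more classical shape-theorem route: a Cauchy--Schwarz variance bound, Chebyshev along the $k^2$-subsequence of a fixed rational ray, a monotone fill-in to the full ray, and then a sandwich of the arbitrary sequence between nearby reference rays. Your argument is longer but uses only second moments and the coordinatewise monotonicity of $\mrL_\zeta$; the paper's is shorter but leans on the exact fourth moment of the Poisson law and treats the two summands individually rather than bounding $\Var(\mrL_\zeta)$.
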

\begin{proof}
Write $\bfv = (x,y)$ and $\bfv_k = (x_k,y_k)$. We will prove the theorem assuming without loss of generality that $x_k \ge kx$ and $y_k \ge ky$ for all $k$. These inequalities would hold after rescaling $\bfv_k$ if necessary. 

Introduce the random variables
    \begin{align*}
        \mrI_k &\coloneq \mrL_\zeta(x_k, 0) \sim \Pois[x_k\lambda] \text{ \ and}\\
        \mrJ_k &\coloneq \left(\mrL_\zeta(x_k, y_k) - \mrL_\zeta(x_k, 0)\right) \sim \Pois\left[\frac{y_k}{\lambda(1-t)}\right]
    \end{align*}
where the second line uses the stationarity of $\zeta$. Then $\mrL_\zeta(\bfv_k) = \mrI_k + \mrJ_k$. We recall that if $X \sim \Pois[\mu]$ for some $\mu > 0$ then $\bbE[X] = \mu$. Also, the moment and the cumulant generating functions for the centered variable $X-\mu$ are given by 
\begin{align*}
M(t) = \bbE[\exp\{t(X-\mu)\}] &= \exp\{\mu (e^t-1-t)\} = \sum_{n=0}^\infty \frac{m_nt^n}{n!}, \\ 
K(t) = \log \bbE[\exp\{t(X-\mu)\}] &= \mu (e^t-1-t) = \sum_{n=0}^\infty \frac{\kappa_n t^n}{n!} \quad \text{ for } t \in \bbR
\end{align*}
where $m_n = \bbE[(X-\mu)^n]$ and $\kappa_n = \one_{\{n \ge 2\}}\mu$. For each $n \in \bbZ_{>0}$, computing the $n$th derivative of $M(t) = \exp\{K(t)\}$, one can express the moment $m_n$ in terms of the cumulants $\kappa_1, \dotsc, \kappa_n$. For the fourth moment in particular, one has $m_4 = \kappa_4 + 3\kappa_2^2 = \mu + 3\mu^2$. Therefore, by the fourth moment bound, 
\begin{align*}
\bbP[|X-\mu| \ge a] \le \frac{\bbE[|X-\mu|^4]}{a^4} \le \frac{\mu+3\mu^2}{a^4} \quad \text{ for any } a > 0. 
\end{align*}
Now pick any $\varepsilon > 0$. Applying the preceding bound to $\mrI_k$ and $\mrJ_k$ gives  
\begin{align*}
\bbP\{|I_k-\lambda x_k| \ge \varepsilon |\bfv_k|\} &\le (\lambda x_k + 3\lambda^2 x_k^2) \cdot \frac{1}{\varepsilon^4|\bfv_k|^4}\\
\bbP\bigg\{\bigg|J_k-\frac{y_k}{\lambda(1-t)}\bigg| \ge \varepsilon |\bfv_k|\bigg\} &\le  \bigg(\frac{y_k}{\lambda (1-t)} + \frac{y_k^2}{\lambda^2 (1-t)^2}\bigg) \cdot \frac{1}{\varepsilon^4 |\bfv_k|^4}
\end{align*}
for any $k$. Since $|\bfv_k|^4 = (x_k^2 + y_k^2)^2 \ge k^2 \min\{x, y\}^2(x_k+y_k)^2$, the right-hand sides above are summable in $k$. Because $\varepsilon$ can be taken arbitrarily small, it then follows from Borel--Cantelli that 
\begin{align*}
\frac{1}{|\bfv_k|}(I_k-\lambda x_k) \to 0  \quad \text{ and } \quad \frac{1}{|\bfv_k|}\bigg(J_k-\frac{y_k}{\lambda(1-t)}\bigg) \to 0
\end{align*}
a.s. Since also $\frac{\bfv_k}{|\bfv_k|} \to \bfv$, rearranging the limits above gives  
\begin{align*}
\frac{\mrL_{\zeta}(\bfv_k)}{|\bfv_k|} = \frac{I_k}{|\bfv_k|} + \frac{J_k}{|\bfv_k|} \to \lambda x + \frac{y}{\lambda (1-t)} = \mrM_\bfv(\lambda), 
\end{align*}
which completes the proof. 
\end{proof}

We have described the behaviour of the height function in stationarity. The next lemma provides a similar result for non-stationary 
one-sided processes, that is, with a positive-rate Poisson process of nucleations on one boundary and no nucleations on the other. This lemma will be an ingredient for our local convergence proof in Section \ref{S:PfLocConv}. 
\begin{lem}\label{lm:one-sided-lln}
    Let $\zeta \sim t\text{-\rm{PNG}}(\lambda, 0)$. Fix a point $\bfv=(x,y)$ with $|\bfv|=1$ and let $\bfv_k=(x_k,y_k) \in \bbR_{\ge 0}^2$ for $k \in \bbZ_{>0}$ with $|\bfv_k| \stackrel{k \to \infty}{\to} \infty$ and $\dfrac{\bfv_k}{|\bfv_k|} \stackrel{k \to \infty}{\to} \bfv$. Then
    \begin{align}\label{eq:one-sided-cases}
        \frac{1}{|\bfv_k|}\mrL_\zeta(\bfv_k) \stackrel{\rm{a.s.}}{\to} \begin{cases}
            \gamma_\bfv, & \text{if } \frac{y}{x} \ge \lambda^2(1-t),\\
            \mrM_\bfv(\lambda), & \text{if } \frac{y}{x} \le \lambda^2(1-t).
        \end{cases}
    \end{align}
\end{lem}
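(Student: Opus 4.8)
The plan is to sandwich $\tfrac{1}{|\bfv_k|}\mrL_\zeta(\bfv_k)$ between quantities whose limits follow from Proposition~\ref{th:height-lln} and from the shape theorem for the classical (zero-boundary) $t$-PNG, $\tfrac{1}{|\bfw_k|}\mrL_{t\text{-\rm{PNG}}(0,0)}(\bfw_k)\to\gamma_\bfw$ a.s.\ in any direction, established by Drillick and Lin \cite{Dril_Lin_24} (see also \cite{Agga_Boro_Whee_23}). Two degenerate cases are handled at once: if $x=0$ then $\zeta$ has no sources on the left boundary and $\mrL_\zeta(\bfv_k)\equiv 0=\gamma_\bfv$; if $y=0$ then $\mrL_\zeta(x_k,0)\sim\Pois[\lambda x_k]$ already gives $\tfrac{1}{|\bfv_k|}\mrL_\zeta(x_k,0)\to\lambda=\mrM_\bfv(\lambda)$, and $\mrL_\zeta(x_k,0)\le\mrL_\zeta(\bfv_k)$ together with the upper bound below pins down the limit. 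So assume $x,y>0$; as in the proof of Proposition~\ref{th:height-lln} I may also take $x_k\ge kx$, $y_k\ge ky$.

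\emph{Upper bound.} Fix $\mu\ge\lambda$. Enlarging the set of sources on the bottom boundary and then the set of sinks on the left boundary only increases the height function at every point (this monotonicity is immediate from the graphical description of $\mrL$, e.g.\ from the colouring in the proof of Lemma~\ref{t-PNG coupling}), so one can couple $\zeta$ with the stationary process $\eta^{(\mu)}\sim t\text{-\rm{PNG}}(\mu,\tfrac1{\mu(1-t)})$ — bulk nucleations in common, the bottom sources of $\zeta$ a thinning of those of $\eta^{(\mu)}$, and $\eta^{(\mu)}$ additionally carrying its left sinks — so that $\mrL_\zeta(\bfv_k)\le\mrL_{\eta^{(\mu)}}(\bfv_k)$. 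Proposition~\ref{th:height-lln} then gives $\limsup_k\tfrac1{|\bfv_k|}\mrL_\zeta(\bfv_k)\le\mrM_\bfv(\mu)$ a.s. Intersecting over a countable dense set of $\mu\ge\lambda$ and using convexity of $\mu\mapsto\mrM_\bfv(\mu)=x\mu+\tfrac{y}{\mu(1-t)}$ with minimiser $\lambda_\bfv$, we obtain $\limsup_k\tfrac1{|\bfv_k|}\mrL_\zeta(\bfv_k)\le\min_{\mu\ge\lambda}\mrM_\bfv(\mu)$, which equals $\mrM_\bfv(\lambda_\bfv)=\gamma_\bfv$ when $\lambda\le\lambda_\bfv$, i.e.\ $\tfrac yx\ge\lambda^2(1-t)$, and equals $\mrM_\bfv(\lambda)$ when $\lambda\ge\lambda_\bfv$, i.e.\ $\tfrac yx\le\lambda^2(1-t)$ — exactly the right-hand side of \eqref{eq:one-sided-cases}.

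\emph{Lower bound.} If $\tfrac yx\ge\lambda^2(1-t)$, discard the bottom sources of $\zeta$; what remains is a classical $t\text{-\rm{PNG}}(0,0)$ whose height is dominated by $\mrL_\zeta$, so $\liminf_k\tfrac1{|\bfv_k|}\mrL_\zeta(\bfv_k)\ge\gamma_\bfv$ by the shape theorem, matching. If $\tfrac yx\le\lambda^2(1-t)$, set $s^\ast:=x-\tfrac{y}{\lambda^2(1-t)}\in[0,x)$ and pick $s_k\le x_k$ with $s_k\sim|\bfv_k|\,s^\ast$ and $x_k-s_k\sim|\bfv_k|\,\tfrac{y}{\lambda^2(1-t)}\to\infty$. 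Running a path along the bottom boundary from the origin to $(s_k,0)$ and then a classical $t$-PNG path built from the bulk data of $\zeta$ inside the rectangle $R_k:=[s_k,x_k]\times[0,y_k]$ yields the superadditive bound $\mrL_\zeta(\bfv_k)\ge\mrL_\zeta(s_k,0)+\mrL^{R_k}$, where $\mrL_\zeta(s_k,0)\sim\Pois[\lambda s_k]$ and $\mrL^{R_k}$ is the height function of a classical $t$-PNG across a rectangle of side lengths $x_k-s_k$ and $y_k$. The fourth-moment plus Borel--Cantelli estimate of Proposition~\ref{th:height-lln} gives $\tfrac1{|\bfv_k|}\mrL_\zeta(s_k,0)\to\lambda s^\ast$ a.s., while the shape theorem and the $1$-homogeneity of $\gamma$ give $\tfrac1{|\bfv_k|}\mrL^{R_k}\to\gamma_{(x-s^\ast,\,y)}=\gamma_{(y/(\lambda^2(1-t)),\,y)}=\tfrac{2y}{\lambda(1-t)}$ a.s. Adding these a.s.\ limits, $\liminf_k\tfrac1{|\bfv_k|}\mrL_\zeta(\bfv_k)\ge\lambda s^\ast+\tfrac{2y}{\lambda(1-t)}=\lambda x+\tfrac{y}{\lambda(1-t)}=\mrM_\bfv(\lambda)$, again matching. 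Combining the two bounds gives \eqref{eq:one-sided-cases}.

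\emph{Main obstacle.} The delicate point is the superadditive decomposition for $t>0$: since the height function here is the $t$-deformed one of \cite{Dril_Lin_24} rather than a literal longest-chain length, one must verify that restricting the coupled graphical data of $\zeta$ to $R_k$ defines a genuine classical $t$-PNG on $R_k$ with zero boundary, and that concatenating with the boundary path does not lose height; at $t=0$ all of this is transparent. The other non-elementary input is the classical $t$-PNG shape theorem itself; an alternative to invoking it for the second-regime lower bound is to transfer the maximal path of the stationary process $\eta^{(\lambda)}$ to $\zeta$ on the (high-probability) event that this path uses no left source, but quantifying that event requires exit-point/curvature estimates and is heavier than the route above.
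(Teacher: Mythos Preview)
Your upper bound and the first-regime lower bound match the paper's almost verbatim (the paper simply takes the single comparison parameter $\mu=\lambda_\bfv$ rather than optimising over all $\mu\ge\lambda$, but that amounts to the same thing).

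For the second regime $\tfrac{y}{x}<\lambda^2(1-t)$ you take a genuinely different route, and there is a gap. Even granting the superadditive bound $\mrL_\zeta(\bfv_k)\ge\mrL_\zeta(s_k,0)+\mrL^{R_k}$, the claim that $\tfrac1{|\bfv_k|}\mrL^{R_k}\to\gamma_{(x-s^\ast,y)}$ \emph{almost surely} does not follow from the Drillick--Lin shape theorem you cite. That theorem gives $\tfrac1{|\bfw_k|}\mrL_{t\text{-PNG}(0,0)}(\bfw_k)\to\gamma_\bfw$ a.s.\ for a \emph{fixed} origin, whereas your $\mrL^{R_k}$ is a zero-boundary $t$-PNG anchored at the moving point $(s_k,0)$ with $s_k\sim|\bfv_k|s^\ast\to\infty$. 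Translation invariance only gives $\mrL^{R_k}\stackrel{d}{=}\mrL_\theta(x_k-s_k,y_k)$, which yields convergence in probability, not a.s.; upgrading would require either a concentration estimate strong enough for Borel--Cantelli across the shifted boxes, or an ergodic argument for the whole moving-origin family, neither of which is in \cite{Dril_Lin_24}. (The superadditive inequality itself is also not immediate for $t>0$: you need the colouring to simultaneously produce a genuine zero-boundary $t$-PNG on $R_k$ \emph{and} the pointwise height comparison with the restriction of $\zeta$, whose left-side data $\tilde\zeta_{s_k}$ depends on $\zeta$'s own crossing choices.)

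The paper sidesteps both issues. Having proved the first regime, it evaluates $\zeta$ along the characteristic direction $\bfu_k=(y_k/(\lambda^2(1-t)),y_k)$, where the limit is already known to equal $\mrM_\bfu(\lambda)$; coupling with the stationary process $\zeta''\sim t\text{-PNG}(\lambda,\tfrac1{\lambda(1-t)})$ then forces $\tfrac1{|\bfu_k|}(\mrL_{\zeta''}-\mrL_\zeta)(\bfu_k)\to0$ a.s. Because the discrepancy $\mrL_{\zeta''}-\mrL_\zeta$ counts $\tfrac{\zeta''}{\zeta}$-second-class particles (all of which emanate from the left boundary and travel up-right), the number of such particles crossing the vertical segment under $\bfv_k$ is dominated by the number crossing under $\bfu_k$ (same height, $\bfu_k$ to the left). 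This one inequality transfers the a.s.\ limit from the characteristic direction to every $\bfv$ below it, with no moving-origin shape theorem and no superadditivity needed.
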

\begin{proof}
    First, suppose that $\frac{y}{x} \ge \lambda^2(1-t)$. Let $\theta \sim t\text{-PNG}(0,0)$ be a $t$-PNG process with empty boundaries. Then $\theta$ and $\zeta$ can be coupled so that $\zeta$ can be obtained from $\theta$ by adding sources, and therefore $\mrL_\theta \le \mrL_\zeta$ on $\bbR^2_{\ge 0}$. Also, $\frac{1}{|\bfv_k|}\mrL_\theta(\bfv_k) \stackrel{\rm{a.s.}}{\to} \gamma_\bfv$ due to \cite[Theorem 1.1]{Dril_Lin_24}. It follows that 
    \begin{align}\label{one-sided-lb}
        \gamma_\bfv \le \liminf\limits_{k \to \infty}{\frac{1}{|\bfv_k|}\mrL_\zeta(\bfv_k)} \quad \text{ almost surely}.
    \end{align}
    On the other hand, note that $\lambda_\bfv = \sqrt{\frac{y}{x(1-t)}} \ge \lambda$. Introduce a process $\zeta' \sim t\text{-PNG}(\lambda_\bfv, \frac{1}{\lambda_\bfv(1-t)})$, coupled with $\zeta$ so that $\zeta'$ is obtained from $\zeta$ by adding sources and sinks. Then $\mrL_\zeta \le \mrL_{\zeta'}$ on $\bbR^2_{\ge 0}$. To see this, first add in sources, which means extra second class particles on the south boundary (cf.\ Figure \ref{fig:config}). Write \(\mrL_\zeta(\bfv_k)=\mrL_\zeta(0,y_k)+\bigl(\mrL_\zeta(\bfv_k)-\mrL_\zeta(0,y_k)\bigr)\), and notice that these second class particles do not end up on the west boundary, hence the first term is not affected by them. Some may exit the north boundary, adding one to the second term, and some others exit east, not affecting \(\mrL_\zeta(\bfv_k)\) whatsoever. Next, add in sinks, again thought of as second class particles on the west boundary. Write this time \(\mrL(\bfv_k)=\mrL(x_k,0)+\bigl(\mrL(\bfv_k)-\mrL(x_k,0)\bigr)\). These second class particles do not end up on the south boundary, hence the first term is not affected by them. Some exit east, increasing the second term by one, while others exit north, not affecting \(\mrL(\bfv_k)\).

    Also, $\zeta'$ is stationary. Therefore, by Theorem \ref{th:height-lln}, $\frac{1}{|\bfv_k|} \mrL_{\zeta'}(\bfv_k) \stackrel{\rm{a.s.}}{\to} \mrM_{\bfv}(\lambda_\bfv) = \gamma_\bfv$ as $k \to \infty$. It follows that 
    \begin{align}\label{one-sided-ub}
        \limsup\limits_{k \to \infty}{\frac{1}{|\bfv_k|}\mrL_\zeta(\bfv_k)} \le \gamma_\bfv \quad \text{ almost surely}.
    \end{align}
    Combining \eqref{one-sided-lb} and \eqref{one-sided-ub} gives 
    \begin{align*}
        \frac{1}{|\bfv_k|}\mrL_\zeta(\bfv_k) \stackrel{\rm{a.s.}}{\to} \gamma_\bfv \quad \text{ as } k \to \infty.
    \end{align*}
    (Note that when $\frac{y}{x} = \lambda^2(1-t)$, we have $\gamma_\bfv = \mrM_\bfv(\lambda)$ and so \eqref{eq:one-sided-cases} is consistent).
    
    Now suppose that $\frac{y}{x} < \lambda^2(1-t)$. Without loss of generality, we may assume that $\frac{y_k}{x_k} < \lambda^2(1-t)$ for all $k$. Let $\zeta'' \sim t\text{-PNG}(\lambda, \frac{1}{\lambda(1-t)})$. Consider the point $\bfu = (u, v)$ satisfying $\frac{v}{u} = \lambda^2(1-t)$ and $|\bfu|=1$, and the sequence $\bfu_k = (\frac{y_k}{\lambda^2(1-t)}, y_k)$ going to $\infty$ along the characteristic direction of the parameter $\lambda$; we have $\frac{\bfu_k}{|\bfu_k|} \to \bfu$. The limiting height behaviour of this sequence in both $\zeta$ and $\zeta''$ is already known: $\frac{1}{|\bfu_k|}\mrL_\zeta(\bfu_k) \stackrel{\rm{a.s.}}{\to} \gamma_\bfu = \mrM_\bfu(\lambda)$ and $\frac{1}{|\bfu_k|}\mrL_{\zeta''}(\bfu_k) \stackrel{\rm{a.s.}}{\to} \mrM_\bfu(\lambda)$, and therefore $\frac{1}{|\bfu_k|}(\mrL_{\zeta''}(\bfu_k) - \mrL_\zeta(\bfu_k)) \stackrel{\rm{a.s.}}{\to} 0$. Observe that $\mrL_{\zeta''}(\bfu_k) - \mrL_\zeta(\bfu_k)$ and $\mrL_{\zeta''}(\bfv_k) - \mrL_\zeta(\bfv_k)$ are equal to the number of second class particles $\frac{\zeta''}{\zeta}$ crossing the vertical segments from $\bfu_k$ and $\bfv_k$, respectively, to the bottom boundary, due to the absence of $\frac{\zeta''}{\zeta}$-particles starting from the bottom boundary. But the latter number is at most the former, because $\bfv_k$ is directly to the right from $\bfu_k$, and every $\frac{\zeta''}{\zeta}$-particle passing under $\bfv_k$ must also pass under $\bfu_k$; see Figure \ref{fig:M-convergence-proof}. Hence $\mrL_{\zeta''}(\bfu_k) - \mrL_\zeta(\bfu_k) \ge \mrL_{\zeta''}(\bfv_k) - \mrL_\zeta(\bfv_k) \ge 0$. Using $|\bfu_k| \le |\bfv_k|$ and taking the almost sure limit gives $\frac{1}{|\bfv_k|}(\mrL_{\zeta''}(\bfv_k) - \mrL_\zeta(\bfv_k)) \stackrel{\rm{a.s.}}{\to} 0$. Now, $\frac{1}{|\bfv_k|}\mrL_{\zeta''}(\bfv_k) \stackrel{\rm{a.s.}}{\to} \mrM_\bfv(\lambda)$ implies $\frac{1}{|\bfv_k|}\mrL_\zeta(\bfv_k) \stackrel{\rm{a.s.}}{\to} \mrM_\bfv(\lambda)$.

    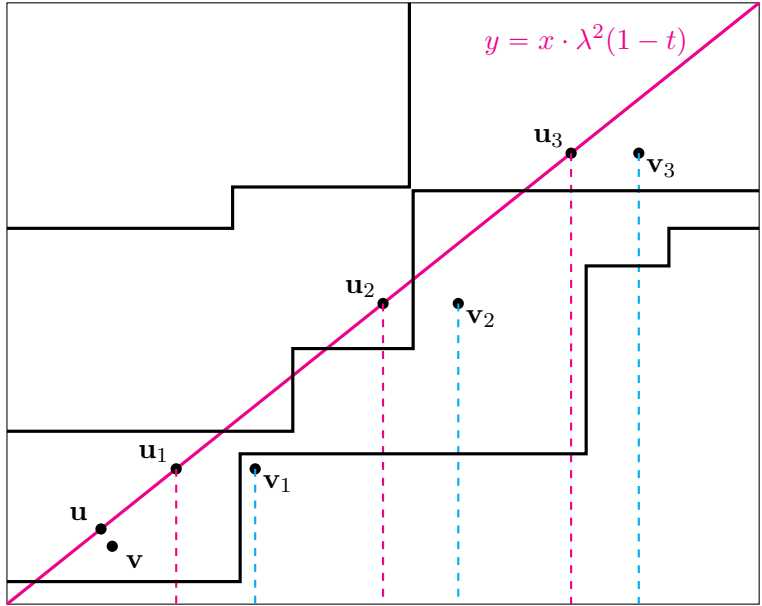
\begin{figure}[ht]
        \begin{center}
        \begin{tikzpicture}[scale=1]
            \draw(0,0)--(10,0)--(10,8)--(0,8)--cycle;
            \draw[magenta,very thick](0,0)--(10,8);
            \node[magenta] at (7.7,7.5) {$y=x\cdot\lambda^2(1-t)$};
            
            \node[circle,fill,inner sep=1.5pt] at (1.4,0.77) {};
            \node at (1.7,0.57) {$\bfv$};
            \node[circle,fill,inner sep=1.5pt] at (3.3,1.8) {};
            \draw[thick,dashed,cyan](3.3,1.8)--(3.3,0);
            \node at (3.6,1.6) {$\bfv_1$};
            \node[circle,fill,inner sep=1.5pt] at (6,4) {};
            \draw[thick,dashed,cyan](6,4)--(6,0);
            \node at (6.3,3.8) {$\bfv_2$};
            \node[circle,fill,inner sep=1.5pt] at (8.4,6) {};
            \draw[thick,dashed,cyan](8.4,6)--(8.4,0);
            \node at (8.7,5.8) {$\bfv_3$};

            \node[circle,fill,inner sep=1.5pt] at (1.25,1) {};
            \node at (0.95,1.2) {$\bfu$};
            \node[circle,fill,inner sep=1.5pt] at (2.25,1.8) {};
            \draw[thick,dashed,magenta](2.25,1.8)--(2.25,0);
            \node at (1.95,2) {$\bfu_1$};
            \node[circle,fill,inner sep=1.5pt] at (5,4) {};
            \draw[thick,dashed,magenta](5,4)--(5,0);
            \node at (4.7,4.2) {$\bfu_2$};
            \node[circle,fill,inner sep=1.5pt] at (7.5,6) {};
            \draw[thick,dashed,magenta](7.5,6)--(7.5,0);
            \node at (7.2,6.2) {$\bfu_3$};

            \draw[very thick](0,0.3)--(3.1,0.3)--(3.1,2)--(7.7,2)--(7.7,4.5)--(8.8,4.5)--(8.8,5)--(10,5);
            \draw[very thick](0,2.3)--(3.8,2.3)--(3.8,3.4)--(5.4,3.4)--(5.4,5.5)--(10,5.5);
            \draw[very thick](0,5)--(3,5)--(3,5.55)--(5.35,5.55)--(5.35,8);
        \end{tikzpicture}
        \caption{The paths of the second class particles are in black. Note that for each $i$, the dashed line under $\bfv_i$ intersects at most as many black lines as the dashed line under $\bfu_i$.}\label{fig:M-convergence-proof}
        \end{center}
    \end{figure}
\end{proof}

\section{A law of large numbers for second class particles}
\label{S:LLN_2CP}

In this section, we prove the following law of large numbers for the trajectories of second class particles between a stationary process and a process with empty vertical boundary. This result will play an important role in our proof of local convergence in Section \ref{S:PfLocConv}. 

\begin{thm}\label{th:2nd-class-lln}
    Suppose $p$ and $r$ are positive real numbers with $p < r$. Consider two $t$-PNG processes, $\pi \sim t\text{-\rm{PNG}}(p,\frac{1}{p(1-t)})$ and $\rho \sim t\text{-\rm{PNG}}(r, 0)$, monotonically coupled so that $\pi < \rho$. Then each $\frac{\rho}{\pi}$-second class particle asymptotically moves with the slope $pr(1-t)$; more precisely, for each $n \in \bbZ$, 
    \begin{align*}
     \frac{\tau}{Q^n_\tau} \stackrel{\rm{a.s.}}{\to} pr(1-t) \quad \text{ as } \tau \to \infty
    \end{align*}
    where $Q^n$ is the particle of $\frac{\rho}{\pi}$ labelled by $n$.
\end{thm}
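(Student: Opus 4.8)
The plan is to exploit the monotone coupling together with the two laws of large numbers for height functions already established (Proposition~\ref{th:height-lln} for the stationary process and Lemma~\ref{lm:one-sided-lln} for the one-sided process), and to read off the trajectory of a second class particle from the discrepancy between the two height functions. Recall that, for any up-right lattice-like path, the number of $\frac{\rho}{\pi}$-particles that have crossed to the right of (or above) a given point $\bfv$ can be written as a difference of the form $\mrL_\rho(\bfv) - \mrL_\pi(\bfv)$ corrected by the particles that have already left through the boundary; in the present coupling $\rho$ has source rate $r$ and empty left boundary while $\pi$ is stationary with parameters $(p,\frac1{p(1-t)})$, so in fact $\mrL_\rho(\bfv) - \mrL_\pi(\bfv)$ equals the signed count of $\frac{\rho}{\pi}$-particles and $\frac{\pi}{\rho}$-particles separating $\bfv$ from the axes. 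First I would set up the key observation: because the labelled particle $Q^n$ is monotone (its path never crosses the paths of $Q^m$ for $m\neq n$, by the enumeration rule of Figure~\ref{fig:scp-turns}.a), the event $\{Q^n_\tau \le c\}$ at "time" (ordinate) $\tau$ is sandwiched between events about how many second class particles lie to the left of the vertical line $x=c$, which in turn is controlled by $\mrL_\rho - \mrL_\pi$ evaluated along that line.

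Concretely, fix a direction $\bfv=(x,y)$ with $|\bfv|=1$, and consider the sequence $\bfv_k = k\bfv$. The one-sided law of large numbers gives $|\bfv_k|^{-1}\mrL_\rho(\bfv_k) \to \gamma_\bfv$ when $y/x \ge r^2(1-t)$ and $\to \mrM_\bfv(r) = rx + \frac{y}{r(1-t)}$ when $y/x \le r^2(1-t)$, while stationarity gives $|\bfv_k|^{-1}\mrL_\pi(\bfv_k) \to \mrM_\bfv(p) = px + \frac{y}{p(1-t)}$. Subtracting, the asymptotic density of second class particles (per unit Euclidean length, in direction $\bfv$) is $g(\bfv) := \lim |\bfv_k|^{-1}(\mrL_\rho - \mrL_\pi)(\bfv_k)$, which is strictly positive in some cone of directions and vanishes identically outside it. The boundary direction of that cone --- the direction $\bfv^\ast$ beyond which no second class particles asymptotically reach --- is exactly where the two limiting shapes first agree; solving $\mrM_{\bfv^\ast}(r) = \mrM_{\bfv^\ast}(p)$ or matching $\gamma$ against $\mrM(p)$, together with the constraint separating the two cases, pins down the slope $y^\ast/x^\ast$. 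A short computation: in the regime $y/x \le r^2(1-t)$ one needs $rx + \frac{y}{r(1-t)} = px + \frac{y}{p(1-t)}$, i.e. $(r-p)x = \frac{y}{1-t}\bigl(\frac1p - \frac1r\bigr) = \frac{y(r-p)}{pr(1-t)}$, so $y/x = pr(1-t)$; one checks this satisfies $pr(1-t) \le r^2(1-t)$ since $p<r$, so it is consistent. Hence the cone of directions carrying a positive density of second class particles is precisely $\{y/x > pr(1-t)\}$, and its lower boundary is the line of slope $pr(1-t)$ --- which is the claimed asymptotic slope of every $Q^n$.

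To turn this density statement into the almost-sure trajectory statement for a \emph{single} labelled particle, I would argue as follows. Upper bound on $Q^n_\tau$ (equivalently, lower bound on the slope): if the ordinate-$\tau$ position $Q^n_\tau$ were asymptotically to the right of the line of slope $pr(1-t)$, i.e. along a direction with $y/x < pr(1-t)$, then by monotonicity of the labels \emph{all} particles with label $\le n$ would also lie in that region, so the count of second class particles crossing the vertical segment below a point $\bfv_k$ in such a direction would stay bounded (at most $n$ plus a bounded boundary correction, since only finitely many particles have label $\le n$), contradicting $g(\bfv_k)\cdot|\bfv_k| \to \infty$ along a direction inside the cone --- use that the region $\{y/x < pr(1-t)\}$ still contains directions $\bfv$ with $g(\bfv)>0$? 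No: it does not, since $g$ vanishes there. So the correct argument is the mirror image: below the critical line, $g\equiv 0$, which forces only $o(|\bfv_k|)$ second class particles there; combined with the fact that infinitely many second class particles are emitted from the left boundary (the sink rate of $\pi$ exceeds that of $\rho$) and they are ordered, one deduces $Q^n$ cannot lag behind, i.e. $\liminf \tau/Q^n_\tau \ge pr(1-t)$. For the matching upper bound $\limsup \tau/Q^n_\tau \le pr(1-t)$: just above the critical line $g>0$, so $\Theta(|\bfv_k|)$ second class particles cross below $\bfv_k$; since the particle labels are ordered and particle $n$ is a fixed finite-index particle, $Q^n$ must eventually be among the leftmost $O(1)$ of these $\Theta(|\bfv_k|)$ particles, hence $Q^n_\tau$ cannot overtake the critical line. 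Making "eventually among the leftmost $O(1)$" precise is the main obstacle: one must rule out that $Q^n$ oscillates or that particles with larger labels overtake it --- but the enumeration rule forbids crossings, so the ordering $Q^{n}_\tau$ is genuinely monotone in $n$ at every fixed $\tau$, and one only needs that $Q^n$ itself, being one fixed particle in an ordered family whose overall density near the critical line is positive, is squeezed onto the critical line. I would formalize this by a two-sided sandwich: for any $\delta>0$, show $\mathbb{P}$-a.s. that eventually $Q^n_\tau \in [\tau/(pr(1-t)+\delta),\, \tau/(pr(1-t)-\delta)]$, the lower endpoint from the $g\equiv0$ argument (too few particles to the right) and the upper endpoint from the $g>0$ argument (the finitely-many low-label particles, in particular $Q^n$, cannot all fail to be within the positive-density region). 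The delicate point --- and where I expect to spend the most care --- is controlling the $o(|\bfv_k|)$ error terms uniformly enough to localize a single particle rather than just a positive fraction of them; this likely needs the monotone-coupling/ASEP structure and the blocking-measure tail bound flagged in the introduction, i.e. the detailed mechanism of the companion Section, rather than the soft height-function LLN alone.
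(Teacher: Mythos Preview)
Your proposal starts from the right ingredients --- the two height-function laws of large numbers and the algebraic identification of $y/x=pr(1-t)$ as the direction where $\mrM_\bfv(r)=\mrM_\bfv(p)$ --- but it misses the single clean observation that makes the paper's argument go through without any density-counting or blocking-measure input. The paper does \emph{not} try to localize $Q^n$ by sandwiching it between regions of positive and zero second-class-particle density. Instead it observes that \emph{along the trajectory of $Q^n$ itself}, the height discrepancy $\mrL_\rho(\bfu_\tau)-\mrL_\pi(\bfu_\tau)$ stays bounded for all $\tau$ (by $|n|+O(1)$). This is a purely combinatorial consequence of the non-crossing rule for second-class particles: in the rectangle with corners $\bfu_0=(Q^n_{\tau_0},\tau_0)$ and $\bfu_\tau=(Q^n_\tau,\tau)$, every $\frac{\rho}{\pi}$-particle other than $Q^n$ that enters through the bottom must exit through the right, and vice versa, so the height difference along the path of $Q^n$ changes by at most $1$ from its initial value. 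Once you have this, you simply take any subsequential limit direction $\bfu$ of $\bfu_\tau/|\bfu_\tau|$, apply the two LLNs, divide the bounded discrepancy by $|\bfu_\tau|\to\infty$ to force the two limiting height functions to agree at $\bfu$, and check that the only consistent solution is $u_y/u_x=pr(1-t)$. Since every subsequential limit is the same, full convergence follows.

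The approach you outline --- identifying a ``cone of positive density'' and then squeezing the $n$th particle onto its boundary --- runs into exactly the obstacle you flag at the end: knowing that the density of second-class particles vanishes (or is positive) in a direction is an $o(|\bfv_k|)$ statement, and by itself does not pin down the location of a single labelled particle. You correctly sense this gap and reach for the ASEP/blocking-measure tail bound from Section~\ref{S:PfLocConv}, but that machinery is not needed here and would be circular in spirit (Theorem~\ref{th:2nd-class-lln} is an input to that section, not a consequence of it). The fix is not more control on the density; it is to replace the global density picture with the local bounded-discrepancy observation above.
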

\begin{proof}
    We begin by recalling from Section \ref{S:Height} that $\mrL_\pi$ and $\mrL_\rho$ denote the height functions associated with $\pi$ and $\rho$, respectively. For each point $\bfv$ on the non-negative quarter of the unit circle and for each point sequence $\bfv_k$ with $|\bfv_k| \to \infty$ and $\frac{\bfv_k}{|\bfv_k|} \to \bfv$, we have
    \begin{align*}
        \frac{1}{|\bfv_k|}\mrL_\pi(\bfv_k) \stackrel{\rm{a.s.}}{\to} \mrM_\bfv(p)
    \end{align*}
    due to Theorem \ref{th:height-lln} and
    \begin{align}\label{height-lim}
        \frac{1}{|\bfv_k|}\mrL_\rho(\bfv_k) \stackrel{\rm{a.s.}}{\to} \begin{cases}
            \gamma_\bfv, & \text{if } \frac{y}{x} \ge r^2(1-t),\\
            \mrM_\bfv(r), & \text{if } \frac{y}{x} \le r^2(1-t)
        \end{cases}
    \end{align}
    due to Lemma \ref{lm:one-sided-lln}.
    
    Let $\tau_0$ be the time when $Q^n$ appears in the system (note that $\tau_0 = 0$ if $n \ge 0$). For some time $\tau$, consider the rectangle $\mathcal{R}$ with opposite corners $\bfu_0=(Q^n_{\tau_0},\tau_0)$ and $\bfu_\tau=(Q^n_\tau,\tau)=(u_{\tau,x},u_{\tau,y})$. For an arbitrary $t$-PNG process, denote by $a_v^\zeta(\tau)$ the number of vertical segments of $\zeta$ that intersect the bottom side of $\mathcal{R}$, excluding $\bfu_0$, and whose top end is not on this side (i.e. they have some part strictly above the bottom side of $\mathcal{R}$). 
    These segments contribute to the increment of the height function of $\zeta$ between $\bfu_0$ and $(Q^n_\tau, \tau_0)$. Similarly, denote by $a_h^\zeta(\tau)$ the number of horizontal segments of $\zeta$ that intersect the right side of $\mathcal{R}$, excluding $(Q^n_\tau, \tau_0)$, and whose left end is not on this side. These segments contribute to the height increment between $(Q^n_\tau, \tau_0)$ and $\bfu_\tau$ if $\zeta$ is $t$-PNG. Then 
    \begin{align*}
        \mrL_\pi(\bfu_\tau) - \mrL_\pi(\bfu_0) = a_v^\pi(\tau) + a_h^\pi(\tau),\\
        \mrL_\rho(\bfu_\tau) - \mrL_\rho(\bfu_0) = a_v^\rho(\tau) + a_h^\rho(\tau).
    \end{align*}
    Subtracting the first line above from the second gives 
    \begin{align*}
        \mrL_\rho(\bfu_\tau) - \mrL_\pi(\bfu_\tau) + (\mrL_\pi(\bfu_0) -\mrL_\rho(\bfu_0)) &= (a_v^\rho(\tau) - a_v^\pi(\tau)) + (a_h^\rho(\tau) - a_h^\pi(\tau)) \\
        &= a_v^{\rho/\pi}(\tau) - a_h^{\rho/\pi}(\tau).
    \end{align*}
    The trajectories of second class particles do not intersect. Hence, every $\frac{\rho}{\pi}$-particle other than $Q^n$ that enters the interior of $\mathcal{R}$ through the bottom side exits through the right side, and vice versa. $Q^n$ is counted in $a_v^{\rho/\pi}(\tau)$ if it passes some positive distance along the bottom side of $\mathcal{R}$ before turning up, and is counted in $a_h^{\rho/\pi}(\tau)$ if it first arrives to the right side later than at time $\tau_0$. Therefore, $a_v^{\rho/\pi}(\tau) - a_h^{\rho/\pi}(\tau) \in \{0, \pm1\}$. Hence, 
    \begin{align}\label{const-dif}
        |\mrL_\rho(\bfu_\tau) - \mrL_\pi(\bfu_\tau)| \le |\mrL_\rho(\bfu_0) -\mrL_\pi(\bfu_0)| + 1,
    \end{align}
    which is a finite number and does not depend on $\tau$ (more precisely, $|\mrL_\rho(\bfu_0) -\mrL_\pi(\bfu_0)| = n$ for $n \ge 1$ and $|\mrL_\rho(\bfu_0) -\mrL_\pi(\bfu_0)| = |n| + 1$ for $n \le 0$). See Figure \ref{fig:second-class-path} for an example.

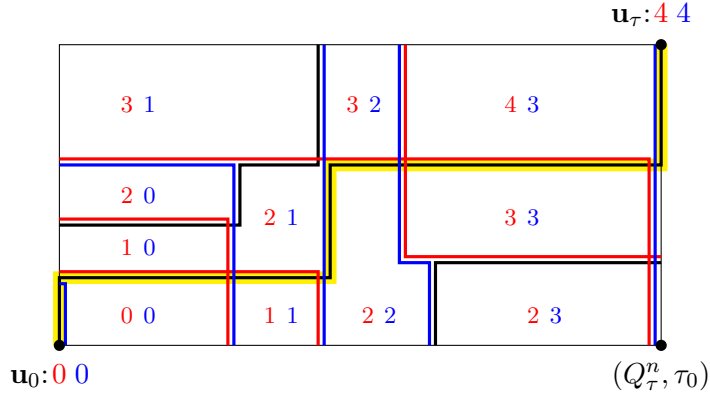
\begin{figure}[ht]
 \begin{center}
  \begin{tikzpicture}[scale=1]
   \draw[black,very thick,preaction={draw,yellow,-,double=yellow,double distance=2\pgflinewidth,}](0,0)--(0,0.9)--(3.6,0.9)--(3.6,2.4)--(8,2.4)--(8,4);
   \draw[black,very thick](5,0)--(5,1.1)--(8,1.1);
   \draw[black,very thick](0,1.6)--(2.4,1.6)--(2.4,2.4)--(3.44,2.4)--(3.44,4);
   
   \draw(0,0)--(0,4)--(8,4)--(8,0)--cycle;
   \draw[red,very thick](0,0.98)--(3.44,0.98)--(3.44,0);
   \draw[red,very thick](7.84,0)--(7.84,2.48)--(0,2.48);
   \draw[red,very thick](8,1.18)--(4.6,1.18)--(4.6,4);
   \draw[red,very thick](0,1.68)--(2.24,1.68)--(2.24,0);

   \draw[blue,very thick](7.92,0)--(7.92,4);
   \draw[blue,very thick](4.92,0)--(4.92,1.1)--(4.52,1.1)--(4.52,4);
   \draw[blue,very thick](3.52,0)--(3.52,4);
   \draw[blue,very thick](2.32,0)--(2.32,2.4)--(0,2.4);
   \draw[blue,very thick](0.08,0)--(0.08,0.82)--(0,0.82);

   \node[circle,fill,inner sep=1.5pt] at (0,0) {};
   \node[anchor=base] at (-0.4,-0.5) {$\bfu_0$:};
   \node[red,anchor=base] at (0,-0.5) {0};
   \node[blue,anchor=base] at (0.3,-0.5) {0};
   \node[circle,fill,inner sep=1.5pt] at (8,4) {};
   \node[anchor=base] at (7.6,4.3) {$\bfu_\tau$:};
   \node[red,anchor=base] at (8,4.3) {4};
   \node[blue,anchor=base] at (8.3,4.3) {4};
   \node[circle,fill,inner sep=1.5pt] at (8,0) {};
   \node[anchor=base] at (8,-0.5) {$(Q^n_\tau, \tau_0)$};

   \node[red] at (0.9,0.4) {\Small0};
   \node[red] at (2.8,0.4) {\Small1};
   \node[red] at (4.1,0.4) {\Small2};
   \node[red] at (6.3,0.4) {\Small2};
   \node[red] at (0.9,1.3) {\Small1};
   \node[red] at (2.8,1.7) {\Small2};
   \node[red] at (0.9,2) {\Small2};
   \node[red] at (0.9,3.2) {\Small3};
   \node[red] at (6,1.7) {\Small3};
   \node[red] at (3.9,3.2) {\Small3};
   \node[red] at (6,3.2) {\Small4};

   \node[blue] at (1.2,0.4) {\Small0};
   \node[blue] at (3.1,0.4) {\Small1};
   \node[blue] at (4.4,0.4) {\Small2};
   \node[blue] at (6.6,0.4) {\Small3};
   \node[blue] at (1.2,1.3) {\Small0};
   \node[blue] at (3.1,1.7) {\Small1};
   \node[blue] at (1.2,2) {\Small0};
   \node[blue] at (1.2,3.2) {\Small1};
   \node[blue] at (6.3,1.7) {\Small3};
   \node[blue] at (4.2,3.2) {\Small2};
   \node[blue] at (6.3,3.2) {\Small3};
  \end{tikzpicture}
  \caption{Possible configuration inside $\mathcal{R}$. $Q^n$ is highlighted with yellow. Red and blue numbers denote relative heights with respect to $\bfu_0$ of $\pi$ and $\rho$, respectively. Note that the difference between the heights of $\pi$ and $\rho$ along the path of $Q^n$ always differs at most by $1$ from such difference at $\bfu_0$.}\label{fig:second-class-path}
 \end{center}
\end{figure}
    
    Pick some divergent increasing sequence $(\tau_i)_{i \in \bbZ_{>0}}$ of times for which $\frac{\bfu_{\tau_i}}{|\bfu_{\tau_i}|} \stackrel{i \to \infty}{\to} \bfu$ for some point $\bfu = (u_x,u_y)$ on the unit circle; this is possible by choosing any increasing divergent sequence and then choosing its subsequence that satisfies needed convergence. Then, both $\frac{1}{|\bfu_{\tau_i}|}\mrL_\pi(\bfu_{\tau_i})$ and $\frac{1}{|\bfu_{\tau_i}|}\mrL_\rho(\bfu_{\tau_i})$ converge in $\bbR$ almost surely. But \eqref{const-dif} implies that $\frac{1}{|\bfu_{\tau_i}|}(\mrL_\rho(\bfu_{\tau_i}) - \mrL_\pi(\bfu_{\tau_i})) \stackrel{\rm{a.s.}}{\to} 0$, therefore $\lim\limits_{i \to \infty}{\frac{1}{|\bfu_{\tau_i}|}\mrL_\pi(\bfu_{\tau_i})} \stackrel{\rm{a.s.}}{=} \lim\limits_{i \to \infty}{\frac{1}{|\bfu_{\tau_i}|}\mrL_\rho(\bfu_{\tau_i})}$, given that these almost sure limits exist. From the values of these limits, we then conclude that 
    \begin{align*}
        \mrM_\bfu(p) = \begin{cases}
            \gamma_\bfu, & \text{if } \frac{u_y}{u_x} \ge r^2(1-t);\\
            \mrM_\bfu(r), & \text{if } \frac{u_y}{u_x} \le r^2(1-t).
        \end{cases}
    \end{align*}
    But the equality $\mrM_\bfu(p) = \gamma_\bfu$ can only hold when $\bfu$ is on the characteristic line of parameter $p$, that is, $\frac{u_y}{u_x} = p^2(1-t) < r^2(1-t)$. Hence, only the second case is possible: $\mrM_\bfu(p) = \mrM_\bfu(r)$. This means that  
    \begin{align*}
        u_xp + \frac{u_y}{p(1-t)} = u_xr + \frac{u_y}{r(1-t)},
    \end{align*}
    that is, $\frac{u_y}{u_x} = pr(1-t)$, 
    which is indeed less than $r^2(1-t)$.

    Hence, for every divergent increasing sequence $(\tau_i)_{i \in \bbZ_{>0}}$ of times for which $\frac{u_{\tau_i,y}}{u_{\tau_i,x}}$ converges in $\bbR^2_{\ge 0}$, the limit is $pr(1-t)$ a.s. Hence, for every divergent increasing $(\tau_i)_{i \in \bbZ_{>0}}$, the convergence $\frac{u_{\tau_i,y}}{u_{\tau_i,x}} \stackrel{\rm{a.s.}}{\to} pr(1-t)$ holds.
\end{proof}

\section{Proof of local convergence}
\label{S:PfLocConv}

For an arbitrary $\varepsilon > 0$, we consider the following three $t$-PNG processes: 
\begin{itemize}
    \item $\alpha \sim t\text{-PNG}\left(\lambda, \dfrac{1}{\lambda(1-t)}\right)$,
    \item $\eta = \eta(\varepsilon) \sim t\text{-PNG}\left(\lambda+\varepsilon, \dfrac{1}{(\lambda+\varepsilon)(1-t)}\right)$,
    \item $\omega = \omega(\varepsilon) \sim t\text{-PNG}(\lambda+\varepsilon, 0)$.
\end{itemize}

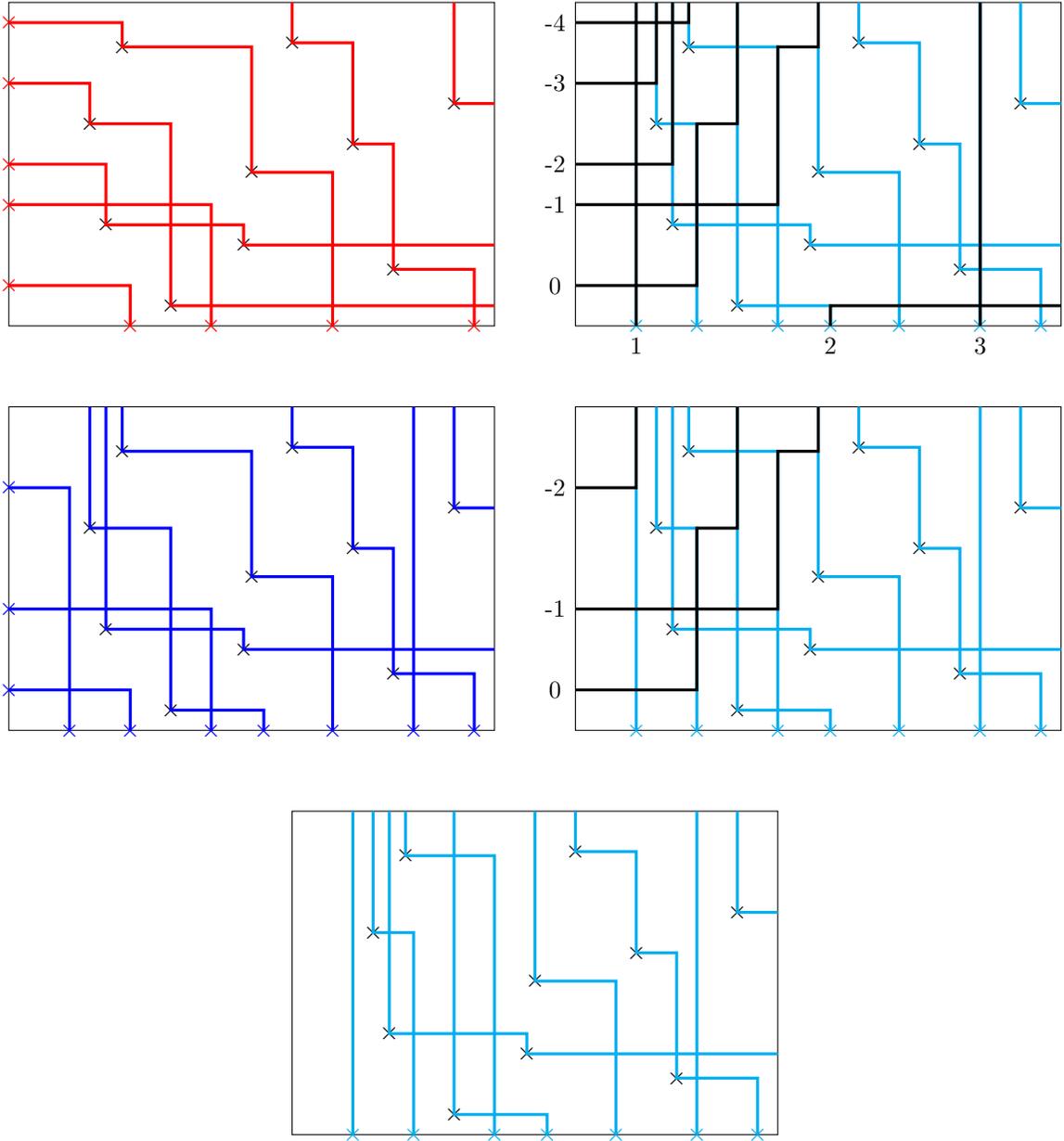
\begin{figure}[ht]
 \begin{center}
  \begin{tikzpicture}[scale=0.58]
  \begin{scope}
   \draw(0,0)--(12,0)--(12,8)--(0,8)--cycle;
   \node[red] at (3, 0) {$\times$};
   \node[red] at (5, 0) {$\times$};
   \node[red] at (8, 0) {$\times$};
   \node[red] at (11.5, 0) {$\times$};
   
   \node[red] at (0, 1) {$\times$};
   \node[red] at (0, 3) {$\times$};
   \node[red] at (0, 4) {$\times$};
   \node[red] at (0, 6) {$\times$};
   \node[red] at (0, 7.5) {$\times$};

   \node at (2, 5) {$\times$};
   \node at (2.4, 2.5) {$\times$};
   \node at (2.8, 6.9) {$\times$};
   \node at (4, 0.5) {$\times$};
   \node at (5.8, 2) {$\times$};
   \node at (6, 3.8) {$\times$};
   \node at (7, 7) {$\times$};
   \node at (8.5, 4.5) {$\times$};
   \node at (9.5, 1.4) {$\times$};
   \node at (11, 5.5) {$\times$};

   \draw[red,very thick](12,0.5)--(4,0.5)--(4,5)--(2,5)--(2,6)--(0,6);
   \draw[red,very thick](12,2)--(5.8,2)--(5.8,2.5)--(2.4,2.5)--(2.4,4)--(0,4);
   \draw[red,very thick](8,0)--(8,3.8)--(6,3.8)--(6,6.9)--(2.8,6.9)--(2.8,7.5)--(0,7.5);
   \draw[red,very thick](3,0)--(3,1)--(0,1);
   \draw[red,very thick](5,0)--(5,3)--(0,3);
   \draw[red,very thick](11.5,0)--(11.5,1.4)--(9.5,1.4)--(9.5,4.5)--(8.5,4.5)--(8.5,7)--(7,7)--(7,8);
   \draw[red,very thick](12,5.5)--(11,5.5)--(11,8);
   \end{scope}
   \begin{scope}[xshift=14cm]
       \draw(0,0)--(12,0)--(12,8)--(0,8)--cycle;
   \node[cyan] at (1.5, 0) {$\times$};
   \node[cyan] at (3, 0) {$\times$};
   \node[cyan] at (5, 0) {$\times$};
   \node[cyan] at (6.3, 0) {$\times$};
   \node[cyan] at (8, 0) {$\times$};
   \node[cyan] at (10, 0) {$\times$};
   \node[cyan] at (11.5, 0) {$\times$};
   
   \node at (2, 5) {$\times$};
   \node at (2.4, 2.5) {$\times$};
   \node at (2.8, 6.9) {$\times$};
   \node at (4, 0.5) {$\times$};
   \node at (5.8, 2) {$\times$};
   \node at (6, 3.8) {$\times$};
   \node at (7, 7) {$\times$};
   \node at (8.5, 4.5) {$\times$};
   \node at (9.5, 1.4) {$\times$};
   \node at (11, 5.5) {$\times$};

   \draw[cyan,very thick](1.5,0)--(1.5,8);
   \draw[cyan,very thick](6.3,0)--(6.3,0.5)--(4,0.5)--(4,8);
   \draw[cyan,very thick](12,2)--(5.8,2)--(5.8,2.5)--(2.4,2.5)--(2.4,8);
   \draw[cyan,very thick](8,0)--(8,3.8)--(6,3.8)--(6,8);
   \draw[cyan,very thick](3,0)--(3,5)--(2,5)--(2,8);
   \draw[cyan,very thick](5,0)--(5,6.9)--(2.8,6.9)--(2.8,8);
   \draw[cyan,very thick](11.5,0)--(11.5,1.4)--(9.5,1.4)--(9.5,4.5)--(8.5,4.5)--(8.5,7)--(7,7)--(7,8);
   \draw[cyan,very thick](12,5.5)--(11,5.5)--(11,8);
   \draw[cyan,very thick](10,0)--(10,8);

   \draw[black,very thick](0,1)--(3,1)--(3,5)--(4,5)--(4,8);
   \draw[black,very thick](0,3)--(5,3)--(5,6.9)--(6,6.9)--(6,8);
   \draw[black,very thick](0,6)--(1.5,6)--(1.5,7.5)--(2,7.5)--(2.8,7.5)--(2.8,8);
   \draw[black,very thick](1.5,0)--(1.5,6)--(2,6)--(2,8);
   \draw[black,very thick](6.3,0)--(6.3,0.5)--(12,0.5);
   \draw[black,very thick](10,0)--(10,8);
   \draw[black,very thick](0,4)--(2.4,4)--(2.4,8);
   \draw[black,very thick](0,7.5)--(1.5,7.5)--(1.5,8);

   \node[black] at (-0.5, 7.5) {\small-4};
   \node[black] at (-0.5, 6) {\small-3};
   \node[black] at (-0.5, 4) {\small-2};
   \node[black] at (-0.5, 3) {\small-1};
   \node[black] at (-0.5, 1) {\small0};
   \node[black] at (1.5, -0.5) {\small1};
   \node[black] at (6.3, -0.5) {\small2};
   \node[black] at (10, -0.5) {\small3};
   \end{scope}
   \begin{scope}[yshift=-10cm]
       \draw(0,0)--(12,0)--(12,8)--(0,8)--cycle;
   \node[blue] at (1.5, 0) {$\times$};
   \node[blue] at (3, 0) {$\times$};
   \node[blue] at (5, 0) {$\times$};
   \node[blue] at (6.3, 0) {$\times$};
   \node[blue] at (8, 0) {$\times$};
   \node[blue] at (10, 0) {$\times$};
   \node[blue] at (11.5, 0) {$\times$};
   
   \node[blue] at (0, 1) {$\times$};
   \node[blue] at (0, 3) {$\times$};
   \node[blue] at (0, 6) {$\times$};
   
   \node at (2, 5) {$\times$};
   \node at (2.4, 2.5) {$\times$};
   \node at (2.8, 6.9) {$\times$};
   \node at (4, 0.5) {$\times$};
   \node at (5.8, 2) {$\times$};
   \node at (6, 3.8) {$\times$};
   \node at (7, 7) {$\times$};
   \node at (8.5, 4.5) {$\times$};
   \node at (9.5, 1.4) {$\times$};
   \node at (11, 5.5) {$\times$};

   \draw[blue,very thick](1.5,0)--(1.5,6)--(0,6);
   \draw[blue,very thick](6.3,0)--(6.3,0.5)--(4,0.5)--(4,5)--(2,5)--(2,8);
   \draw[blue,very thick](12,2)--(5.8,2)--(5.8,2.5)--(2.4,2.5)--(2.4,8);
   \draw[blue,very thick](8,0)--(8,3.8)--(6,3.8)--(6,6.9)--(2.8,6.9)--(2.8,8);
   \draw[blue,very thick](3,0)--(3,1)--(0,1);
   \draw[blue,very thick](5,0)--(5,3)--(0,3);
   \draw[blue,very thick](11.5,0)--(11.5,1.4)--(9.5,1.4)--(9.5,4.5)--(8.5,4.5)--(8.5,7)--(7,7)--(7,8);
   \draw[blue,very thick](12,5.5)--(11,5.5)--(11,8);
   \draw[blue,very thick](10,0)--(10,8);
   \end{scope}
   \begin{scope}[xshift=14cm,yshift=-10cm]
       \draw(0,0)--(12,0)--(12,8)--(0,8)--cycle;
   \node[cyan] at (1.5, 0) {$\times$};
   \node[cyan] at (3, 0) {$\times$};
   \node[cyan] at (5, 0) {$\times$};
   \node[cyan] at (6.3, 0) {$\times$};
   \node[cyan] at (8, 0) {$\times$};
   \node[cyan] at (10, 0) {$\times$};
   \node[cyan] at (11.5, 0) {$\times$};
   
   \node at (2, 5) {$\times$};
   \node at (2.4, 2.5) {$\times$};
   \node at (2.8, 6.9) {$\times$};
   \node at (4, 0.5) {$\times$};
   \node at (5.8, 2) {$\times$};
   \node at (6, 3.8) {$\times$};
   \node at (7, 7) {$\times$};
   \node at (8.5, 4.5) {$\times$};
   \node at (9.5, 1.4) {$\times$};
   \node at (11, 5.5) {$\times$};

   \draw[cyan,very thick](1.5,0)--(1.5,8);
   \draw[cyan,very thick](6.3,0)--(6.3,0.5)--(4,0.5)--(4,8);
   \draw[cyan,very thick](12,2)--(5.8,2)--(5.8,2.5)--(2.4,2.5)--(2.4,8);
   \draw[cyan,very thick](8,0)--(8,3.8)--(6,3.8)--(6,8);
   \draw[cyan,very thick](3,0)--(3,5)--(2,5)--(2,8);
   \draw[cyan,very thick](5,0)--(5,6.9)--(2.8,6.9)--(2.8,8);
   \draw[cyan,very thick](11.5,0)--(11.5,1.4)--(9.5,1.4)--(9.5,4.5)--(8.5,4.5)--(8.5,7)--(7,7)--(7,8);
   \draw[cyan,very thick](12,5.5)--(11,5.5)--(11,8);
   \draw[cyan,very thick](10,0)--(10,8);

   \draw[black,very thick](0,1)--(3,1)--(3,5)--(4,5)--(4,8);
   \draw[black,very thick](0,3)--(5,3)--(5,6.9)--(6,6.9)--(6,8);
   \draw[black,very thick](0,6)--(1.5,6)--(1.5,8);

   \node[black] at (-0.5, 6) {\small-2};
   \node[black] at (-0.5, 3) {\small-1};
   \node[black] at (-0.5, 1) {\small0};
   \end{scope}
   \begin{scope}[xshift=7cm,yshift=-20cm]
       \draw(0,0)--(12,0)--(12,8)--(0,8)--cycle;
   \node[cyan] at (1.5, 0) {$\times$};
   \node[cyan] at (3, 0) {$\times$};
   \node[cyan] at (5, 0) {$\times$};
   \node[cyan] at (6.3, 0) {$\times$};
   \node[cyan] at (8, 0) {$\times$};
   \node[cyan] at (10, 0) {$\times$};
   \node[cyan] at (11.5, 0) {$\times$};
   
   \node at (2, 5) {$\times$};
   \node at (2.4, 2.5) {$\times$};
   \node at (2.8, 6.9) {$\times$};
   \node at (4, 0.5) {$\times$};
   \node at (5.8, 2) {$\times$};
   \node at (6, 3.8) {$\times$};
   \node at (7, 7) {$\times$};
   \node at (8.5, 4.5) {$\times$};
   \node at (9.5, 1.4) {$\times$};
   \node at (11, 5.5) {$\times$};

   \draw[cyan,very thick](1.5,0)--(1.5,8);
   \draw[cyan,very thick](6.3,0)--(6.3,0.5)--(4,0.5)--(4,8);
   \draw[cyan,very thick](12,2)--(5.8,2)--(5.8,2.5)--(2.4,2.5)--(2.4,8);
   \draw[cyan,very thick](8,0)--(8,3.8)--(6,3.8)--(6,8);
   \draw[cyan,very thick](3,0)--(3,5)--(2,5)--(2,8);
   \draw[cyan,very thick](5,0)--(5,6.9)--(2.8,6.9)--(2.8,8);
   \draw[cyan,very thick](11.5,0)--(11.5,1.4)--(9.5,1.4)--(9.5,4.5)--(8.5,4.5)--(8.5,7)--(7,7)--(7,8);
   \draw[cyan,very thick](12,5.5)--(11,5.5)--(11,8);
   \draw[cyan,very thick](10,0)--(10,8);
   \end{scope}
  \end{tikzpicture}
 \end{center}
\caption{Possible configurations. Second class processes $\frac{\omega}{\alpha}$ and $\frac{\omega}{\eta}$ are in black and indexed. Top row: $\alpha$ and $\frac{\omega}{\alpha}$. Middle row: $\eta$ and $\frac{\omega}{\eta}$. Bottom row: $\omega$.}\label{fig:alpha-eta-omega}
\end{figure}

In particular, $\eta$ and $\alpha$ are stationary. As in Corollary \ref{cor:poi-coupling}, we can couple the processes so that $\alpha \le \eta \le \omega$. We will analyse two processes of second class particles, $\frac{\omega}{\eta}$ and $\frac{\omega}{\alpha}$, writing $H^k$ and $A^k$ for $\frac{\omega}{\eta}$- and $\frac{\omega}{\alpha}$-particles indexed by $k$, respectively. To recall our indexing convention, see the end of Section \ref{S:2CP}. See also Figure \ref{fig:alpha-eta-omega} for an example. 

Due to the assumed coupling, the union of trajectories of $\frac\omega\eta$-particles is a subset of the union of trajectories of $\frac\omega\alpha$-particles. Also, $(\frac{\omega}{\eta})_0 = \varnothing$ since $\omega_0 = \eta_0$; in other words, all $\frac{\omega}{\eta}$-particles start their movement on the left boundary and hence are indexed by non-positive integers. Now fix some index $k \in \bbZ_{\le 0}$. For any $\sigma \ge 0$, there exists an index $X^k(\sigma)$ of an $\frac{\omega}{\alpha}$-particle that coincides with $H^k$; that is, $H^k(\sigma) = A^{X^k(\sigma)}(\sigma)$. We can define $X^k(\sigma)$ to be right-continuous in $\sigma$. Define the quantity $Y^m(\sigma)$ by $H^{Y^m(\sigma)}(\sigma) = A^m(\sigma)$ whenever there exists $k$ such that $A^m(\sigma) = H^k(\sigma)$. $Y^m(\sigma)$ represents the index of the $\frac{\omega}{\eta}$-particle that moves together with $A^m$. Since $\frac\omega\alpha$-particles do not necessarily carry an $\frac\omega\eta$-particle at every point of their trajectories, $Y^m(\sigma)$ is only defined on a subset of $\bbR_{\ge0}$, and it is right-continuous on its domain.

Now consider the process $\underline{V} = (V_j)_{j \in \bbZ} \colon \bbR_{\ge 0} \mapsto \{0, 1\}^\bbZ$ given by 
\begin{align}
V_j(\sigma) = \one\{j = X^k(\sigma) \text{ for some } k \in \bbZ_{\le 0}\}.      
\end{align}
That is, $V_j(\sigma)$ indicates whether the $\frac{\omega}{\alpha}$-particle $A^j$ coincides with some $\frac{\omega}{\eta}$-particle at time $\sigma$.
To describe the evolution of $\underline{V}$, we first note that the set 
\begin{align*}
\bigcup_{j \in \bbZ}\{\sigma \ge 0: A^j(\sigma) = A^{j+1}(\sigma)\}    
\end{align*} of all times when two $\frac{\omega}{\alpha}$-particles meet a.s.\ form an increasing positive sequence $(\sigma_i)_{i \in \bbZ_{>0}}$. For each $i \in \bbZ_{>0}$, let $m_i \in \bbZ$ denote the a.s.\ unique index with $A^{m_i}(\sigma_i) = A^{m_i+1}(\sigma_i)$. 
Setting $\sigma_0 = 0$, we introduce the discrete-time process given by $\underline{V}^i = \underline{V}(\sigma_i)$ for $i \in \bbZ_{\ge 0}$. The initial state for this process is as follows. For $j > 0$, $V^0_j = 0$ because all $\frac{\omega}{\eta}$- particles start from the left boundary. Also, since $\tilde{\eta}_0$ is an $\frac{\lambda}{\lambda+\varepsilon}$-thinning of $\tilde{\alpha}_0$ on the left boundary, $V^0_j \sim \Ber[\frac{\lambda}{\lambda+\varepsilon}]$ for $j \le 0$, and these random variables are jointly independent. In the following lemma, the function $e_m: \bbZ \to \{0, 1\}$ denotes the function that equals $1$ at $m$, and $0$ elsewhere. 
\begin{lem}\label{jump-rules}
Conditionally on $\omega$ and $\alpha$, the process $\underline{V}$ evolves according to the following rules for each $i \in \bbZ_{>0}$. 
\begin{enumerate}[\normalfont (1)]
    \item If $V^{i-1}_{m_i} = V^{i-1}_{m_i+1}$ then $\underline{V}^i = \underline{V}^{i-1}$. 
    \item if $V^{i-1}_{m_i} = 0$ and $V^{i-1}_{m_i+1} = 1$ then $\underline{V}^i = \underline{V}^{i-1} - e_{m_i+1} + e_{m_i}$ with probability 1 (a left jump). 
    \item if $V^{i-1}_{m_i} = 1$ and $V^{i-1}_{m_i+1} = 0$ then $\underline{V}^i = \underline{V}^{i-1} - e_{m_i} + e_{m_i+1}$ with probability $t$ (a right jump), and $\underline{V}^i = \underline{V}^{i-1}$ with probability $1-t$.
\end{enumerate}
\end{lem}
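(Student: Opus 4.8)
The plan is to reinterpret $\underline V$ geometrically and then reduce the statement to a local analysis at the meeting points $\sigma_i$. The starting point is the decomposition, valid at every parameter value $\sigma$,
\[
\left(\frac{\omega}{\alpha}\right)_\sigma=\left(\frac{\omega}{\eta}\right)_\sigma\sqcup\left(\frac{\eta}{\alpha}\right)_\sigma,\qquad\widetilde{\left(\frac{\omega}{\alpha}\right)}_\sigma=\widetilde{\left(\frac{\omega}{\eta}\right)}_\sigma\sqcup\widetilde{\left(\frac{\eta}{\alpha}\right)}_\sigma,
\]
which follows at once from $\alpha_\sigma\subseteq\eta_\sigma\subseteq\omega_\sigma$ and $\tilde\alpha_\sigma\supseteq\tilde\eta_\sigma\supseteq\tilde\omega_\sigma$. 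Hence each $\frac{\omega}{\alpha}$-particle $A^j$, at each parameter value, carries exactly one $\frac{\omega}{\eta}$- or $\frac{\eta}{\alpha}$-particle (uniqueness using the non-crossing of second-class particles, exactly as for $X^k$); call $A^j$ \emph{$\omega\eta$-flavoured} in the first case and \emph{$\eta\alpha$-flavoured} in the second. By definition $V_j(\sigma)=1$ precisely when $A^j$ is $\omega\eta$-flavoured at $\sigma$, so the evolution of $\underline V$ is the evolution of these flavours along the $\frac{\omega}{\alpha}$-particles.

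First I would show $\underline V$ can change only at the times $\sigma_i$. Away from the meeting times the trajectories of distinct $\frac{\omega}{\alpha}$-particles are disjoint, and every interaction between two $\frac{\omega}{\eta}$-particles, between two $\frac{\eta}{\alpha}$-particles, or between an $\frac{\omega}{\eta}$- and an $\frac{\eta}{\alpha}$-particle, occurs at a point where two $\frac{\omega}{\alpha}$-particles coincide; so a flavour is transferred only at some $\sigma_i$, and only between $A^{m_i}$ and $A^{m_i+1}$. Since $\frac{\omega}{\eta}$-particles neither appear nor disappear in the bulk, the number of $\omega\eta$-flavoured particles among $\{A^{m_i},A^{m_i+1}\}$ is preserved across $\sigma_i$, so $(V_{m_i},V_{m_i+1})$ is either left unchanged or transposed. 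This already gives case (1) and reduces cases (2) and (3) to deciding whether the transposition happens and with what probability; it also shows $\underline V$ is right-continuous and constant on each $(\sigma_{i-1},\sigma_i)$.

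The core step is the local picture at a meeting $\sigma_i$, with meeting point $\bfv$. By the labelling convention and non-crossing, $A^{m_i}$ reaches $\bfv$ moving horizontally and $A^{m_i+1}$ moving vertically, and by Figure \ref{fig:scp-turns}.a they exchange outgoing directions: past $\bfv$, $A^{m_i}$ moves up along the vertical track $A^{m_i+1}$ arrived on, and $A^{m_i+1}$ moves right along the horizontal track $A^{m_i}$ arrived on. I then read off, from the flavours (and working at a generic such $\bfv$), which segments of $\alpha,\eta,\omega$ meet at $\bfv$. In case (2) the incoming horizontal segment is an $\alpha$- but not an $\eta$-segment and the incoming vertical one is an $\omega$- but not an $\eta$-segment, so $\bfv$ carries no $\eta$-segment (no $\eta$ corner/crossing decision), no horizontal $\eta$-particle, and no $\alpha$-corner; by the turning rules of Section \ref{S:2CP} the $\frac{\omega}{\eta}$-particle on $A^{m_i+1}$ cannot turn right and the $\frac{\eta}{\alpha}$-particle on $A^{m_i}$ cannot turn up, so the $\omega\eta$-flavour passes to $A^{m_i}$ and the $\eta\alpha$-flavour to $A^{m_i+1}$ deterministically --- the asserted left jump with probability $1$. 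In case (3) the incoming horizontal segment is simultaneously an $\alpha$- and an $\eta$-segment (not an $\omega$-segment) and the incoming vertical one simultaneously an $\omega$- and an $\eta$-segment (not an $\alpha$-segment), so a horizontal $\eta$-segment meets a vertical $\eta$-segment at $\bfv$: with probability $1-t$ this is an $\eta$-corner and the $\frac{\omega}{\eta}$-particle turns up, staying on $A^{m_i}$; with probability $t$ it is an $\eta$-crossing and the $\frac{\omega}{\eta}$-particle goes straight across onto the horizontal track now carried by $A^{m_i+1}$ --- the asserted right jump. The $\frac{\eta}{\alpha}$-particle on $A^{m_i+1}$ behaves in the complementary way under the same coin, and since there is no corner/crossing decision of $\omega$ or $\alpha$ at $\bfv$, this single $t$-PNG coin governs the whole event.

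It remains to check that in case (3) this coin is, conditionally on $\omega$, $\alpha$ and $(\underline V^0,\dots,\underline V^{i-1})$, a fresh $\Ber(t)$. The corner/crossing decisions of $t$-PNG are mutually independent fair $\Ber(t)$ coins; the one at $\bfv$ sits at a point which, by the previous paragraph, is neither a corner/crossing site of $\omega$ nor of $\alpha$, so conditioning on $\omega$ and $\alpha$ does not reveal it. Moreover $\bfv$ has parameter value $\sigma_i>\sigma_{i-1}$, while $(\underline V^0,\dots,\underline V^{i-1})$ is a function of $\omega$, $\alpha$ and the $\eta$-coins at points of parameter value at most $\sigma_{i-1}$ (using that $\eta$ restricted to $\{\text{product of coordinates}\le s\}$ depends only on coins at such points, together with the constancy of $\underline V$ on $(\sigma_{i-1},\sigma_i)$ shown above); as the coin at $\bfv$ is independent of all of these, it stays an independent $\Ber(t)$ after conditioning, giving the stated transition probabilities. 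I expect the main obstacle to be precisely the bookkeeping in the core step: for each of the three $t$-PNG processes and the three associated second-class processes, tracking which particle is ridden just before and just after $\bfv$, and verifying that every ``forced'' move is genuinely forced, i.e.\ that no additional corner, crossing, or collision of some other process is hidden at $\bfv$.
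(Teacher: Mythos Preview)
Your proposal is correct and follows essentially the same approach as the paper: a local case analysis at the meeting point $\bfP_i$ of $A^{m_i}$ and $A^{m_i+1}$, determining in each case whether the $\frac{\omega}{\eta}$-particle continues vertically or horizontally past $\bfP_i$ and hence which of $A^{m_i}$, $A^{m_i+1}$ carries it afterwards. The paper's proof is terser --- it simply notes, for case (2), that there is no horizontal $\eta$-line through $\bfP_i$ so the $\frac{\omega}{\eta}$-particle must continue up, and, for case (3), that the horizontally moving $\frac{\omega}{\eta}$-particle meets a vertical $\eta$-particle at $\bfP_i$, triggering a fresh $\eta$ corner/crossing coin --- and refers to Figure~\ref{fig:ahw-turns} for the five local configurations. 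Your flavour decomposition via $\frac{\eta}{\alpha}$, the explicit verification that $\underline V$ is constant between consecutive $\sigma_i$, and the final paragraph on conditional independence of the $\eta$-coin at $\bfP_i$ are all additions the paper leaves implicit; they are sound and make the argument more self-contained, but they do not constitute a different route.
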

\begin{proof}
    Before meeting at time $\sigma_i$, $A^{m_i}$ moves from left to right, adding some horizontal segment present in $\alpha$ but not in $\omega$, and $A^{m_i+1}$ moves from bottom to top, erasing some vertical segment of $\omega$ which is not in $\alpha$. Let $\bfP_i$ denote the meeting point of $A^{m_i}$ and $A^{m_i+1}$. Hence, $\bfP_i$ lies on the path of a single vertically moving $\omega$-particle.
    
    Case (1) corresponds to the situation when both $A^{m_i}$ and $A^{m_i+1}$ either carry no $\frac{\omega}{\eta}$-particles, in which case $\underline{V}$ stays the same (Figure \ref{fig:ahw-turns}.a), or carry one each, and hence both these $\frac{\omega}{\eta}$-particles are blocked from switching to the neighbouring $\frac{\omega}{\alpha}$-particles (Figure \ref{fig:ahw-turns}.b).
    
    In case (2), $A^{m_i+1}$ carries the $\frac{\omega}{\eta}$-particle $H^{Y^{m_i+1}(\sigma_i)}$. There are no horizontal lines through $\bfP_i$ neither in $\omega$ nor in $\eta$, hence $H^{Y^{m_i+1}(\sigma_i)}$ continues to move up after passing through $\bfP_i$, switching its associated $\frac{\omega}{\alpha}$-particle to the previous one (Figure \ref{fig:ahw-turns}.c).
    
    In case (3), $A^{m_i}$ carries the $\frac{\omega}{\eta}$-particle $H^{Y^{m_i}(\sigma_i)}$. At point $\bfP_i$, $H^{Y^{m_i}(\sigma_i)}$, which moves together with an $\eta$-particle, meets another $\eta$-particle. This corresponds to a sampling of a crossing or corner point of $\eta$ at $\bfP_i$. A crossing point is formed with probability $t$, so that $H^{Y^{m_i}(\sigma_i)}$ continues to move to the right and hence switches to the next $\frac{\omega}{\alpha}$-particle (Figure \ref{fig:ahw-turns}.d). Alternatively, a corner point is formed with probability $1-t$, with $H^{Y^{m_i}(\sigma_i)}$ turning up and staying with the same $\frac{\omega}{\alpha}$-particle (Figure \ref{fig:ahw-turns}.e).
\end{proof}

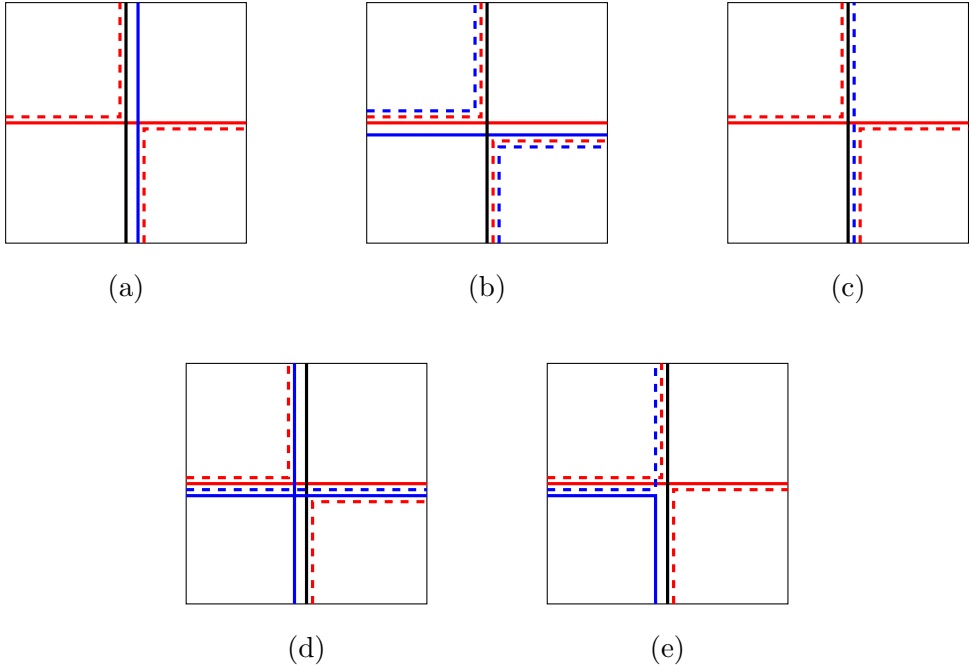
\begin{figure}[ht]
 \begin{center}
  \begin{tikzpicture}[scale=0.8]
   \begin{scope}[xshift=0cm]
    \draw(0,0)--(4,0)node[midway,below=8pt]{(a)}--(4,4)--(0,4)--cycle;
    \draw[red,very thick](4,2)--(0,2);
    \draw[black,very thick](2,0)--(2,4);
    \draw[blue,very thick](2.2,0)--(2.2,4);
    \draw[red,very thick,dashed](2.3,0)--(2.3,1.9)--(4,1.9);
    \draw[red,very thick,dashed](0,2.1)--(1.9,2.1)--(1.9,4);
   \end{scope}
   \begin{scope}[xshift=6cm]
    \draw(0,0)--(4,0)node[midway,below=8pt]{(b)}--(4,4)--(0,4)--cycle;
    \draw[red,very thick](4,2)--(0,2);
    \draw[blue,very thick](4,1.8)--(0,1.8);
    \draw[black,very thick](2,0)--(2,4);
    \draw[red,very thick,dashed](2.1,0)--(2.1,1.7)--(4,1.7);
    \draw[red,very thick,dashed](0,2.1)--(1.9,2.1)--(1.9,4);
    \draw[blue,very thick,dashed](2.2,0)--(2.2,1.6)--(4,1.6);
    \draw[blue,very thick,dashed](0,2.2)--(1.8,2.2)--(1.8,4);
   \end{scope}
   \begin{scope}[xshift=12cm]
    \draw(0,0)--(4,0)node[midway,below=8pt]{(c)}--(4,4)--(0,4)--cycle;
    \draw[red,very thick](4,2)--(0,2);
    \draw[black,very thick](2,0)--(2,4);
    \draw[blue,very thick,dashed](2.1,0)--(2.1,4);
    \draw[red,very thick,dashed](2.2,0)--(2.2,1.9)--(4,1.9);
    \draw[red,very thick,dashed](0,2.1)--(1.9,2.1)--(1.9,4);
   \end{scope}
   \begin{scope}[xshift=3cm,yshift=-6cm]
    \draw(0,0)--(4,0)node[midway,below=8pt]{(d)}--(4,4)--(0,4)--cycle;
    \draw[red,very thick](4,2)--(0,2);
    \draw[black,very thick](2,0)--(2,4);
    \draw[blue,very thick](1.8,0)--(1.8,4);
    \draw[red,very thick,dashed](2.1,0)--(2.1,1.7)--(4,1.7);
    \draw[red,very thick,dashed](0,2.1)--(1.7,2.1)--(1.7,4);
    \draw[blue,very thick](4,1.8)--(0,1.8);
    \draw[blue,very thick,dashed](0,1.9)--(4,1.9);
   \end{scope}
   \begin{scope}[xshift=9cm,yshift=-6cm]
    \draw(0,0)--(4,0)node[midway,below=8pt]{(e)}--(4,4)--(0,4)--cycle;
    \draw[red,very thick](4,2)--(0,2);
    \draw[black,very thick](2,0)--(2,4);
    \draw[red,very thick,dashed](2.1,0)--(2.1,1.9)--(4,1.9);
    \draw[red,very thick,dashed](0,2.1)--(1.9,2.1)--(1.9,4);
    \draw[blue,very thick,dashed](0,1.9)--(1.8,1.9)--(1.8,4);
    \draw[blue,very thick](0,1.8)--(1.8,1.8)--(1.8,0);
   \end{scope}
  \end{tikzpicture}
  \caption{Possible configurations at point $\bfP_i$ (in the middle). We use red for $\alpha$, blue for $\eta$, black for $\omega$, red dashed for $\frac{\omega}{\alpha}$, blue dashed for $\frac{\omega}{\eta}$.}\label{fig:ahw-turns}
 \end{center}
\end{figure}

The next lemma provides an exponentially decaying bound on the right tail of the label $X^0(\sigma)$ at any time $\sigma \ge 0$. 
The following idea is similar to the one used in Ferrari-Kipnis-Saada \cite{fks_91} and Bal\'azs-Sepp\"al\"ainen \cite[Lemma 4.1]{Bala_Sepp_09}.

\begin{lem}\label{RightTail}
There exist constants $c \in (0, 1)$ and $N \in \bbZ_{>0}$ such that $\bbP\{ X^0(\sigma) \ge n\} \le c^n$ for $n \ge N$ and $\sigma \ge 0$. 
\end{lem}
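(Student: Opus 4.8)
The plan is to couple the discrete-time exclusion process $\underline V$ from above with a copy of itself started from a reversible product blocking measure, and then to read off the right tail of the rightmost particle of $\underline V$, which turns out to be exactly $X^0$. Condition throughout on $\omega$, $\alpha$ and the second class particle process $\frac{\omega}{\alpha}$; this fixes the sequence $(\sigma_i,m_i)_{i\ge1}$ of meeting times and edges, and by Lemma \ref{jump-rules} the conditional law of $\underline V$ is that of the discrete-time Markov chain with the three update rules stated there, started from the product law in which sites $j>0$ are vacant and sites $j\le0$ are occupied independently with probability $p:=\frac{\lambda}{\lambda+\varepsilon}$. Since the trajectories of the $\frac{\omega}{\eta}$-particles are mutually non-crossing with $H^0$ the lowest among them, and the same holds for the $\frac{\omega}{\alpha}$-particles, one has $X^0(\sigma)=\max\{X^k(\sigma):k\in\bbZ_{\le0}\}=\max\{j:V_j(\sigma)=1\}$, the position of the rightmost particle of $\underline V$. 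If $t=0$ the claim is immediate, since case (3) of Lemma \ref{jump-rules} then never produces a right jump, so this rightmost position is non-increasing and bounded by $X^0(0)\le0$; hence assume $t\in(0,1)$ from now on.

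Let $c_0:=\frac{p}{1-p}=\frac{\lambda}{\varepsilon}$ and let $\mu=\bigotimes_{j\in\bbZ}\Ber[\mu_j]$ with $\mu_j:=\frac{c_0t^j}{1+c_0t^j}$. At an edge $(j,j+1)$ the single-edge update of Lemma \ref{jump-rules} moves a particle left with probability $1$ and right with probability $t$, and a product measure is reversible for it precisely when $t\,\mu_j(1-\mu_{j+1})=(1-\mu_j)\mu_{j+1}$; our $\mu_j$ satisfy this (equivalently $\frac{\mu_{j+1}/(1-\mu_{j+1})}{\mu_j/(1-\mu_j)}=t$). Hence $\mu$ is reversible, in particular stationary, for every single-edge update, and therefore invariant under the full dynamics no matter in which order the edges $m_1,m_2,\dots$ are visited.

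Now run a second chain $\underline W$ using the same schedule $(\sigma_i,m_i)$ and driven by the same case-(3) coins as $\underline V$, coupled so that $\underline W(0)\sim\mu$ while $\underline V(0)\le\underline W(0)$ coordinatewise; this is possible because both initial laws are product and, by the choice of $c_0$, $\mu_j\ge\bbP\{V_j(0)=1\}$ for all $j$ (trivially for $j>0$, and for $j\le0$ since $t^j\ge1$ forces $\mu_j\ge\frac{c_0}{1+c_0}=p$). A routine case check over the rules of Lemma \ref{jump-rules} shows that the shared coins preserve the coordinatewise order, so $\underline V(\sigma)\le\underline W(\sigma)$ for every $\sigma\ge0$, while $\underline W(\sigma)\sim\mu$ for every $\sigma$ by the previous paragraph. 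Hence, uniformly in $\sigma\ge0$,
\[
 \bbP\{X^0(\sigma)\ge n\}\ \le\ \bbP\{\exists\,j\ge n:\,W_j(\sigma)=1\}\ \le\ \sum_{j\ge n}\mu_j\ \le\ \sum_{j\ge n}c_0t^j\ =\ \frac{c_0}{1-t}\,t^n .
\]
Averaging over the conditioning (the bound is independent of it) and taking, say, $c:=\sqrt t\in(0,1)$ together with any $N$ for which $\frac{c_0}{1-t}t^n\le(\sqrt t)^n$ for all $n\ge N$ finishes the proof.

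I expect the main obstacle to be the two bookkeeping verifications above: that the basic coupling with shared coins preserves the coordinatewise order under the state-dependent rules of Lemma \ref{jump-rules}, and that the product blocking measure is invariant under each individual edge update — the latter being precisely what lets invariance survive the random, $(\omega,\alpha,\frac{\omega}{\alpha})$-dependent update schedule. The remaining pieces (the choice of $c_0$ yielding the initial stochastic domination, and the union bound) are routine.
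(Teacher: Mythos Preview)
Your proof is correct and follows essentially the same route as the paper: couple $\underline V$ with a second copy of the same discrete-time exclusion dynamics started from the product blocking measure $\mu_j=\frac{c_0t^j}{1+c_0t^j}$ (your $c_0=\lambda/\varepsilon$ is exactly the paper's $t^c$ with $c=\log_t(\lambda/\varepsilon)$), verify reversibility of $\mu$ under each single-edge update, check that the shared-coin coupling preserves the coordinatewise order, and then read off the geometric tail of the rightmost occupied site via a union bound. The paper writes out the monotone coupling case-by-case where you simply call it a routine check, and it does not separate out the $t=0$ case, but otherwise the arguments coincide.
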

\begin{proof}
    We will construct a $\{0, 1\}$-valued, discrete-time stationary process $(\underline{U}^i: i \in \bbZ_{\ge 0})$ coupled with $(\underline{V}^i: i \in \bbZ_{\ge 0})$ such that $\underline{U}^i \ge \underline{V}^i$ for every $i$. The transition probabilities for $\underline{U}^i$ will be the same as those of $\underline{V}^i$, which were described in Lemma \ref{jump-rules}. The distribution of the initial state $\underline{U}^0$ will be chosen as a \emph{product blocking measure}, that is, all coordinates of $\underline{U}^0$ are independent and, 
    \begin{align*}
    U^0_j \sim \Ber\bigg[\frac{t^{j+c}}{1+t^{j+c}}\bigg]    
    \end{align*}
    for every $j \in \bbZ$ and some constant $c \in \bbZ$. 
    The following calculation verifies the reversibility of the $\underline{U}$-process. For every $i \in \bbZ$, (assuming $\underline{U}^{i-1}$ is distributed as above), 
    \begin{alignat*}{2}
        &\bbP\{ \text{ jump from $m_i$ to $m_i+1$ at time $\sigma_i$ } \} \\ 
        &= t\cdot\bbP\{ U^{i-1}_{m_i} = 1 \text{ and } U^{i-1}_{m_i+1} = 0 \}\\
        &= t\cdot\frac{t^{m_i+c}}{1+t^{m_i+c}}\cdot\frac{1}{1+t^{m_i+1+c}} \\ 
        &= 1\cdot\frac{1}{1+t^{m_i+c}}\cdot\frac{t^{m_i+1+c}}{1+t^{m_i+1+c}}\\
        &= 1\cdot\bbP\{ U^{i-1}_{m_i} = 0 \text{ and } U^{i-1}_{m_i+1} = 1 \} \\ 
        &= \bbP\{ \text{ jump from $m_i+1$ to $m_i$ at time $\sigma_i$ } \}.
    \end{alignat*}
    Hence $\underline{U}^i$ is stationary. Choose $c = \log_t\frac{\lambda}{\varepsilon}$. Then $\frac{t^c}{1+t^c} = \frac{\lambda}{\lambda+\varepsilon}$ and $\frac{t^{j+c}}{1+t^{j+c}} > \frac{\lambda}{\lambda+\varepsilon}$ for all $j < 0$. This means $U^0_j$ stochastically dominates $V^0_j$ for every $j$. Hence $\underline{U}^0$ can be coupled with $\underline{V}^0$ so that $\underline{U}^0 \ge \underline{V}^0$. 
    
    We will now describe the extension of this coupling for $\underline{U}^i$ and $\underline{V}^i$ with $i > 0$, in which the inequality $\underline{U}^i \ge \underline{V}^i$ persists for all $i$. We shall aim for $\underline{U}^i \ge \underline{V}^i$ assuming $\underline{U}^{i-1} \ge \underline{V}^{i-1}$. Recall that the only possible changes between $\underline{V}^{i-1}$ and $\underline{V}^i$, and therefore between $\underline{U}^{i-1}$ and $\underline{U}^i$, are jumps between positions $m_i$ and $m_i+1$. These are the possible cases (two bits stand for the entries in positions $m_i$ and $m_{i+1}$ of respective random vectors). 
    
    \begin{align*}
    \begin{cases}
        \underline{V}^{i-1} : 00, \ \begin{cases}
            \underline{U}^{i-1} : 00 \rightarrow \hspace{8.52pt}\underline{V}^i : 00, \ \underline{U}^i : 00\\
            \underline{U}^{i-1} : 01 \rightarrow \hspace{8.52pt}\underline{V}^i : 00, \ \underline{U}^i : 10 \text{ (left jump in $\underline{U}$)}\\
            \underline{U}^{i-1} : 10 \rightarrow \begin{cases} 
                \underline{V}^i : 00, \ \underline{U}^i : 01 \text{ with probability $t$ (right jump in $\underline{U}$)}\\
                \underline{V}^i : 00, \ \underline{U}^i : 10 \text{ with probability $1-t$}
            \end{cases}\\
            \underline{U}^{i-1} : 11 \rightarrow \hspace{8.52pt}\underline{V}^i : {00}, \ \underline{U}^i : 11\\
        \end{cases}\\
        \underline{V}^{i-1} : 01, \ \hspace{1.22pt}\begin{cases}
            \underline{U}^{i-1} : 01 \rightarrow \hspace{8.52pt}\underline{V}^i : 10, \ \underline{U}^i : 10 \text{ (left jump in both)}\\
            \underline{U}^{i-1} : 11 \rightarrow \hspace{8.52pt}\underline{V}^i : 10, \ \underline{U}^i : 11 \text{ (left jump in $\underline{V}$)}\\
        \end{cases}\\
        \underline{V}^{i-1} : 10, \ \begin{cases}
            \underline{U}^{i-1} : 10 \rightarrow \begin{cases}
                \underline{V}^i : 01, \ \underline{U}^i : 01 \text{ with probability $t$ (right jump in both)}\\
                \underline{V}^i : 10, \ \underline{U}^i : 10 \text{ with probability $1-t$}
            \end{cases}\\
            \underline{U}^{i-1} : 11 \rightarrow \begin{cases}
                \underline{V}^i : 01, \ \underline{U}^i : 11 \text{ with probability $t$ (right jump in $\underline{V}$)}\\
                \underline{V}^i : 10, \ \underline{U}^i : 11 \text{ with probability $1-t$}
            \end{cases}\\
        \end{cases}\\
        \underline{V}^{i-1} : 11, \ \hspace{9.74pt}\underline{U}^{i-1} : 11 \rightarrow \hspace{8.52pt}\underline{V}^i : 11, \ \underline{U}^i : 11
    \end{cases}
    \end{align*}

    This coupling does indeed keep the invariant $\underline{U}^i \ge \underline{V}^i$, and one can easily see it is consistent with the marginal distributions of $\underline{V}$ and $\underline{U}$.
    
    Let $R^i$ be the index of the rightmost 1 in $\underline{U}^i$, or $\infty$ is there is no such index. $X^0(\sigma_i)$ is the index of the rightmost 1 in $\underline{V}^i$, hence $R^i \ge X^0(\sigma_i)$ for all $i$. For all $i$, $n \ge 0$:
    \begin{align*}
        \bbP\{ R^i \ge n \} = \bbP\{ U^i_j=1 \text{ for some } j \ge n \} \le \sum_{j=n}^\infty \bbP\{ U^i_j=1 \} = \sum_{j=n}^\infty \frac{t^{j+c}}{1+t^{j+c}} < \sum_{j=n}^\infty t^{j+c} = \frac{t^{n+c}}{1-t}.
    \end{align*}
    Note that for any $\sigma$ there is an index $i$ such that $X^0(\sigma) = X^0(\sigma_i)$. Hence, at all times $\sigma$ for all $k \ge 0$ we have $\bbP\{ X^0(\sigma) \ge n \} < \frac{t^{n+c}}{1-t}$, and the exponential bound is attained.
\end{proof}
Recall that $\frac{\omega}{\alpha}$-particles satisfy the law of large numbers stated in Theorem \ref{th:2nd-class-lln} and move along the direction of the slope $\lambda(\lambda+\varepsilon)(1-t)$.
\begin{cor}\label{HSlope}
    The rightmost $\frac{\omega}{\eta}$-particle $H^0$ moves along or above the limiting direction of the slope $\lambda(\lambda+\varepsilon)(1-t)$, in the sense that
    \begin{align*}
        \bbP\{ y < \left(\lambda(\lambda+\varepsilon)(1-t)-\delta\right)H^0_y\} \stackrel{y \to \infty}{\to} 0
    \end{align*}
    for every $\delta > 0$.
\end{cor}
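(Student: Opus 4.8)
The plan is to combine the per-particle law of large numbers for the $\frac{\omega}{\alpha}$-particles (Theorem~\ref{th:2nd-class-lln}) with the uniform tail bound on the carrying label $X^0$ (Lemma~\ref{RightTail}). Write $s:=\lambda(\lambda+\varepsilon)(1-t)$. First I would apply Theorem~\ref{th:2nd-class-lln} with $\pi=\alpha$ (parameter $p=\lambda$) and $\rho=\omega$ (parameter $r=\lambda+\varepsilon>p$): for each fixed label $n$ the $\frac{\omega}{\alpha}$-particle $A^n$ satisfies $A^n_y/y\to 1/s$ almost surely as $y\to\infty$. Since $1/s<1/(s-\delta)$, this gives $\bbP\{A^n_y\ge y/(s-\delta)\}\to 0$ as $y\to\infty$ for every fixed $n$ and every $\delta>0$.

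Next I would exploit the coupling structure of Section~\ref{S:PfLocConv}. Recall $H^0$ shares its trajectory with the $\frac{\omega}{\alpha}$-particles, $H^0(\sigma)=A^{X^0(\sigma)}(\sigma)$, and that these paths are plane-ordered (so in particular $A^j_y$ is nondecreasing in $j$). On the event $E_y:=\{H^0_y>y/(s-\delta)\}$, let $\tau=\tau(y)$ be the internal parameter at which $H^0$ first reaches the vertical line $\{x=y/(s-\delta)\}$; then $\tau\le L(y):=y^2/(s-\delta)$ deterministically, and the carrying particle $A^{X^0(\tau)}$ crosses that line at height at most $y$. If $X^0(\tau)\le n$, the plane-order of the $\frac{\omega}{\alpha}$-paths forces $A^n$ to cross $\{x=y/(s-\delta)\}$ at height at most $y$ as well, i.e.\ $A^n_y\ge y/(s-\delta)$. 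Hence, for $n\ge N$ (with $N,c$ as in Lemma~\ref{RightTail}),
\begin{align*}
\bbP(E_y)\ \le\ \bbP\{A^n_y\ge \tfrac{y}{s-\delta}\}\ +\ \bbP\{X^0(\tau(y))>n\},
\end{align*}
and the first term vanishes as $y\to\infty$ by the previous paragraph.

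For the second term, since $X^0$ is piecewise constant and changes only at the discrete meeting times $\sigma_i$ of the $\frac{\omega}{\alpha}$-particles, and $\tau(y)\le L(y)$, one has $X^0(\tau(y))\le\max\{X^0(\sigma_i):\sigma_i\le L(y)\}$. Conditioning on $(\omega,\alpha)$ — which fixes the $\sigma_i$ and hence $I_y:=\#\{i:\sigma_i\le L(y)\}$ — Lemma~\ref{RightTail} yields $\bbP\{X^0(\sigma_i)\ge n\mid\omega,\alpha\}\le c^n$ for every $i$, so a union bound gives $\bbP\{X^0(\tau(y))>n\}\le\bbE[I_y]\,c^n$. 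As the meeting points of second-class particles form a locally finite point process, $\bbE[I_y]$ grows at most polynomially in $y$, so choosing $n=n(y)\to\infty$ slowly (e.g.\ $n(y)=C\log y$ with $C$ large) makes $\bbE[I_y]\,c^{n(y)}\to0$. For this growing label one still needs $\bbP\{A^{n(y)}_y\ge y/(s-\delta)\}\to0$, which should follow because $A^{n(y)}$ starts at a bottom-boundary point at distance $O(\log y)=o(y)$ from the origin and then travels with asymptotic slope $s$ (made rigorous, for instance, by a translation comparison with a single $\frac{\omega}{\alpha}$-particle together with Theorem~\ref{th:2nd-class-lln}). Sending $y\to\infty$ completes the proof, and since $\delta>0$ is arbitrary this is exactly the claimed statement.

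I expect Step~3 to be the main obstacle. The LLN of Theorem~\ref{th:2nd-class-lln} is a per-particle statement, whereas the label $X^0$ carried by $H^0$ at a prescribed geometric height is random, and Lemma~\ref{RightTail} controls it only in the internal parameter $\sigma$; bridging these via the a priori bound $\tau(y)\le L(y)$ forces the comparison label $n$ to grow with $y$, and one then has to control $A^{n(y)}_y$ for a slowly growing label — this is the technical heart. (If one can instead show directly that the carrying label at a large height is $O(1)$ with probability $1-o(1)$, one keeps $n$ fixed and simply lets $n\to\infty$ at the end, using only the finite-label monotonicity $A^0_y\le A^n_y$ and Theorem~\ref{th:2nd-class-lln}.)
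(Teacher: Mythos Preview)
Your decomposition into $\{A^j_y > y/(s-\delta)\}$ and $\{\text{carrying label at height }y > j\}$ is exactly the paper's, and your use of Theorem~\ref{th:2nd-class-lln} for the first term is correct. Where you diverge is in the second term: the paper simply keeps $j$ \emph{fixed}. It picks $j$ large enough that Lemma~\ref{RightTail} gives $\bbP\{X^0_y > j\}<\kappa/2$ uniformly in $y$, and then picks $y_0$ large enough (for this fixed $j$) that $\bbP\{A^j_y>y/(s-\delta)\}<\kappa/2$ for $y\ge y_0$. That is the whole proof, and it is precisely the alternative you sketch in your final parenthetical.

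The detour you take instead --- union-bounding over all meeting times $\sigma_i\le L(y)$, letting the comparison label grow as $n(y)\asymp\log y$, and then trying to control $A^{n(y)}_y$ --- is both unnecessary and incomplete. You do not justify that $\bbE[I_y]$ grows at most polynomially, and more seriously, Theorem~\ref{th:2nd-class-lln} is a per-particle statement, so $\bbP\{A^{n(y)}_y\ge y/(s-\delta)\}\to 0$ for a \emph{growing} label does not follow from it; the ``translation comparison'' you mention would itself need a proof. None of this is needed, because the bound $c^n$ in Lemma~\ref{RightTail} does not depend on $\sigma$: that uniformity is exactly what lets one fix $j$ once and for all. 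If you are uneasy about applying the lemma at the random parameter corresponding to height $y$, note that its proof actually gives the conditional estimate $\bbP\{X^0(\sigma)>j\mid\omega,\alpha\}\le c^j$ for any $(\omega,\alpha)$-measurable $\sigma$; since ``label $>j$ at height $y$'' forces $X^0(\sigma^*)>j$ at the $(\omega,\alpha)$-measurable value $\sigma^*=A^j_y\cdot y$, the uniform bound transfers immediately.
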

\begin{proof}
Fix $\kappa > 0$.  By Lemma \ref{RightTail}, there exists $j \in \bbZ_{>0}$ such that $\bbP\{ X^0_y > j \} < \kappa/2$ for all $y \in \bbR_{\ge 0}$. For this $j$, due to the direction of $\frac{\omega}{\alpha}$-particles, there exists $y_0 \in \bbR_{\ge 0}$ such that $\bbP\{ y < \left(\lambda(\lambda+\varepsilon)(1-t)-\delta\right)A^j_y \} < \kappa/2$ for all $y \ge y_0$. Hence, using the union bound, we obtain that 
\begin{align*}
    \bbP\{ y < \left(\lambda(\lambda+\varepsilon)(1-t)-\delta\right)H^0_y\} \le \bbP\{ X^0_y > j \} + \bbP\{ y < \left(\lambda(\lambda+\varepsilon)(1-t)-\delta\right)A^j_y \} < \kappa.
\end{align*}
for $y \ge y_0$. This implies $\bbP\{ y < \left(\lambda(\lambda+\varepsilon)(1-t)-\delta\right)H^0_y\} \stackrel{y \to \infty}{\to} 0$ as claimed. 
\end{proof}

Our eventual goal is to show that the zero-boundary $t$-PNG process in the limit has the same local property that characterizes a stationary process, that is, Poisson processes at the intersections of constant-size intervals and space-time paths. We will demonstrate this convergence to Poisson processes by considering finite families of disjoint non-empty intervals and establishing convergences to independent Poisson random variables for the numbers of particles passing through each interval.

More precisely, consider a sequence $(\bfv_k = (x_k, y_k))_{k\in\bbZ_{>0}}$ of points in $\bbR^2_{>0}$ such that $|\bfv_k| \to \infty$ and $\frac{\bfv_k}{|\bfv_k|} \to \bfv = (x,y)$ with $\frac{y}{x} = \lambda^2(1-t)$. Let $(\mathcal{I}^i)_{i \in [M]}$ be a family of disjoint non-empty intervals of $\bbR_{>0} \times\{0\}$ for some $M \in \bbZ_{>0}$. We write $m_i$ for the length of the interval $\mathcal{I}^i$. Similarly, let $(\mathcal{J}^j)_{j \in [N]}$ be a family of disjoint non-empty intervals of $\{0\}\times\bbR_{>0}$ for some $N \in \bbZ_{>0}$, and let $n_j$ be the length of $\mathcal{J}^j$. Write $\mathcal{I}^i_k$ and $\mathcal{J}^j_k$ for $\mathcal{I}^i + \bfv_k$ and $\mathcal{J}^j + \bfv_k$, respectively.

\begin{lem}\label{OmegaConv}
The process of intersection points of the space-time paths of $\omega$-particles with the horizontal rays $\bbR_{>0}\times\{y_k\}$ and the vertical rays $\{x_k\}\times\bbR_{>0}$ converges in distribution to a Poisson process of rate $\lambda+\varepsilon$ on the horizontal axis and $\frac{1}{(\lambda+\varepsilon)(1-t)}$ on the vertical axis in the sense that, for arbitrary interval families $(\mathcal{I}^i)_{i \in [M]}$ and $(\mathcal{J}^j)_{j \in [N]}$,
\begin{multline}\label{omega_ineq}
    \left( \left|\omega \cap \mathcal{I}^i_k\right| \right)_{i \in [M]} \otimes \left( \left|\omega \cap \mathcal{J}^j_k\right| \right)_{j \in [N]}\\
    \xrightarrow{d} \left(\bigotimes\limits_{i \in [M]} \Pois\left[(\lambda+\varepsilon)m_i\right]\right) \otimes \left(\bigotimes\limits_{j \in [N]} \Pois\left[\frac{n_j}{(\lambda+\varepsilon)(1-t)}\right]\right)
\end{multline}
as $k \to \infty$.
\end{lem}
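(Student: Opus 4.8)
The plan is to compare the $\omega$-counts appearing in \eqref{omega_ineq} with the corresponding counts for the stationary process $\eta$, and to show that the two families of counts agree with probability tending to $1$ as $k\to\infty$. Since $\eta$ is stationary, its counts have \emph{exactly} the claimed limiting law for every $k$, so this comparison is all that is needed.

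First I would record the law of the $\eta$-counts. Fix a rectangle $R_k=[x_k,X]\times[y_k,Y]$ large enough that all of $\mathcal I^1_k,\dots,\mathcal I^M_k$ lie on its bottom side and all of $\mathcal J^1_k,\dots,\mathcal J^N_k$ on its left side. By the stationarity of $\eta\sim t\text{-PNG}(\lambda+\varepsilon,\tfrac1{(\lambda+\varepsilon)(1-t)})$ \cite[Theorem 1.8]{Dril_Lin_24}, the boundary data induced on $R_k$ by $\eta$ consists of a rate-$(\lambda+\varepsilon)$ Poisson process of sources on the bottom side and an independent rate-$\tfrac1{(\lambda+\varepsilon)(1-t)}$ Poisson process of sinks on the left side, independent of the bulk nucleations. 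Almost surely no $\eta$-particle path has a horizontal segment at height exactly $y_k$ or a vertical segment at abscissa exactly $x_k$, so the sources lying in $\mathcal I^i_k$ are exactly the points of $\eta\cap\mathcal I^i_k$, and the sinks lying in $\mathcal J^j_k$ are exactly the points of $\eta\cap\mathcal J^j_k$. As interval lengths are translation-invariant, the $\eta$-analogue of the left side of \eqref{omega_ineq} is distributed exactly as its right side, for every $k$.

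Next I would isolate the discrepancy between $\omega$ and $\eta$ via second class particles. Since $\eta\le\omega$, the vertical segments of $\omega$ are those of $\eta$ together with those of the $\frac\omega\eta$-particles, and the horizontal segments of $\eta$ are those of $\omega$ together with those of the $\frac\omega\eta$-particles; hence $|\omega\cap\mathcal I^i_k|$ exceeds $|\eta\cap\mathcal I^i_k|$ by the number $D^i_k\ge0$ of vertical segments of $\frac\omega\eta$-particles crossing $\mathcal I^i_k$, and $|\eta\cap\mathcal J^j_k|$ exceeds $|\omega\cap\mathcal J^j_k|$ by the number $E^j_k\ge0$ of horizontal segments of $\frac\omega\eta$-particles crossing $\mathcal J^j_k$. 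In particular, if all $D^i_k$ and all $E^j_k$ vanish then the $\omega$- and $\eta$-vectors in \eqref{omega_ineq} coincide. Recall $(\frac\omega\eta)_0=\varnothing$, so every $\frac\omega\eta$-particle enters on the left boundary with label $\le0$; hence $H^0$ is the rightmost $\frac\omega\eta$-particle at every height, and therefore also the first (at the lowest height) to reach any given abscissa. Writing $d$ for the largest height above $y_k$ occupied by the intervals $\mathcal J^j_k$ (a constant depending only on the $\mathcal J^j$), and using that every $\mathcal I^i_k$ lies strictly to the right of $\{x=x_k\}$, one obtains
\begin{align*}
\bigcup_{i\in[M]}\{D^i_k>0\}\ \subseteq\ \{H^0_{y_k}>x_k\}, \qquad \bigcup_{j\in[N]}\{E^j_k>0\}\ \subseteq\ \{H^0_{y_k+d}\ge x_k\}.
\end{align*}

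To finish, I would show both right-hand events have vanishing probability via Corollary \ref{HSlope}. Since $\tfrac{\bfv_k}{|\bfv_k|}\to\bfv$ with $\tfrac yx=\lambda^2(1-t)<\lambda(\lambda+\varepsilon)(1-t)$, fix $\delta>0$ with $\lambda^2(1-t)<\lambda(\lambda+\varepsilon)(1-t)-\delta$. Then $x_k$ is asymptotic to $\tfrac{y_k}{\lambda^2(1-t)}$, whereas $\tfrac{y_k}{\lambda(\lambda+\varepsilon)(1-t)-\delta}$ and $\tfrac{y_k+d}{\lambda(\lambda+\varepsilon)(1-t)-\delta}$ are asymptotic to the strictly smaller quantity $\tfrac{y_k}{\lambda(\lambda+\varepsilon)(1-t)-\delta}$; hence for all large $k$,
\begin{align*}
\{H^0_{y_k}>x_k\}\subseteq\{y_k<(\lambda(\lambda+\varepsilon)(1-t)-\delta)H^0_{y_k}\}, \quad \{H^0_{y_k+d}\ge x_k\}\subseteq\{y_k+d<(\lambda(\lambda+\varepsilon)(1-t)-\delta)H^0_{y_k+d}\},
\end{align*}
and Corollary \ref{HSlope}, applied at the diverging heights $y_k$ and $y_k+d$, shows both probabilities tend to $0$. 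Thus the laws of the $\omega$- and $\eta$-vectors in \eqref{omega_ineq} differ in total variation by at most $\bbP\{\exists\,i\colon D^i_k>0\text{ or }\exists\,j\colon E^j_k>0\}\to0$, which together with the first step yields \eqref{omega_ineq}. The main obstacle is the geometric reduction to the single trajectory $H^0$, and especially the vertical-ray case: Corollary \ref{HSlope} only bounds the $\frac\omega\eta$-particles through a horizontal-ray statement, so one must exploit that $H^0$, being rightmost at every height, is also the first to each abscissa, in order to convert that into control of the vertical-ray counts. The accompanying choice of $\delta$ is precisely what forces the use of the inflated parameter $\lambda+\varepsilon$: it is the strict inequality $\lambda^2(1-t)<\lambda(\lambda+\varepsilon)(1-t)$, valid because $\varepsilon>0$, that separates the characteristic direction $\bfv$ from the asymptotic direction of the second class particles.
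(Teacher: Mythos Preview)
Your proof is correct and follows essentially the same approach as the paper's: compare $\omega$ with the stationary process $\eta$, use that the discrepancies are precisely the $\frac{\omega}{\eta}$-second class particles, and invoke Corollary \ref{HSlope} together with the slope comparison $\lambda^2(1-t)<\lambda(\lambda+\varepsilon)(1-t)$ to make those discrepancies vanish in probability. Your treatment is in fact more explicit than the paper's terse ``all intervals are below the line\ldots hence Corollary \ref{HSlope} implies\ldots'': you carefully separate the horizontal and vertical intervals and spell out, via the monotonicity of $\tau\mapsto H^0_\tau$, why a bound on the rightmost particle's abscissa at height $y_k+d$ also controls horizontal crossings of the vertical intervals $\mathcal J^j_k$.
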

\begin{proof}
    Due to $\frac{y_k}{x_k} \to \lambda^2(1-t)$, all intervals from $(\mathcal{I}^i)_{i \in [M]}$ and $(\mathcal{J}^j)_{j \in [N]}$ are entirely below the line of the slope $\lambda(\lambda+\varepsilon)(1-t)$ for $k$ large enough. Hence, Corollary \ref{HSlope} implies that $\left( \left|\frac{\omega}{\eta} \cap \mathcal{I}^i_k\right| \right)_{i \in [M]} \otimes \left( \left|\frac{\omega}{\eta} \cap \mathcal{J}^j_k\right| \right)_{j \in [N]} \xrightarrow{p} \vec{0}$.

    Due to stationarity, for every $k$, sets $\eta \cap \mathcal{I}^i_k$ and $\eta \cap \mathcal{J}^j_k$ for all $i$ and $j$ are independent Poisson processes on $\mathcal{I}^i_k$ and $\mathcal{J}^j_k$ of rates $\lambda+\varepsilon$ and $\frac{1}{(\lambda+\varepsilon)(1-t)}$, respectively. Thus, 
    \begin{align*}
        (|\eta \cap \mathcal{I}^i_k|)_{i \in [M]} \otimes (|\eta \cap \mathcal{J}^j_k|)_{j \in [N]} \sim \left(\bigotimes\limits_{i \in [M]} \Pois\left[(\lambda+\varepsilon)m_i\right]\right) \otimes \left(\bigotimes\limits_{j \in [N]} \Pois\left[\frac{n_j}{(\lambda+\varepsilon)(1-t)}\right]\right).
    \end{align*}
    
    Note that $(\omega \cap \mathcal{I}^i_k) \supseteq (\eta \cap \mathcal{I}^i_k)$ for all $i$ and $k$, and therefore $\left|(\omega \cap \mathcal{I}^i_k) \setminus (\eta \cap \mathcal{I}^i_k)\right| = \left|\frac{\omega}{\eta} \cap \mathcal{I}^i_k\right| \xrightarrow{p} 0$. Thus $\left( \left|\omega \cap \mathcal{I}^i_k\right| \right)_{i \in [M]} - \left( \left|\eta \cap \mathcal{I}^i_k\right| \right)_{i \in [M]} \xrightarrow{p} \vec{0}$, and similarly $\left (\left|\eta \cap \mathcal{J}^j_k\right| \right)_{j \in [N]} - \left( \left|\omega \cap \mathcal{J}^j_k\right| \right)_{j \in [N]} \xrightarrow{p} \vec{0}$, which implies the lemma.
\end{proof}

Consider another process $\widehat{\omega} \sim t\text{-PNG}(\widehat{\lambda}+\varepsilon, 0)$ with $\widehat{\lambda} = \frac{1}{\lambda(1-t)}$. Pick a sequence $(\widehat{\bfv}_k = (\widehat{x}_k, \widehat{y}_k))_{k \in \bbZ_{>0}}$ such that $|\widehat{\bfv}_k| \to \infty$ and $\frac{\widehat{y}_k}{\widehat{x}_k} \to \widehat{\lambda}^2(1-t)$, and define the interval systems $(\widehat{\mathcal{I}}^i_k)_{i \in [\widehat{M}], k \in \bbZ_{>0}}$, $(\widehat{\mathcal{J}}^j_k)_{j \in [\widehat{N}], k \in \bbZ_{>0}}$ with lengths $(\widehat{m}_i)_{i \in [\widehat{M}]}$ and $(\widehat{n}_j)_{j \in [\widehat{N}]}$ similarly to above. Then, by Lemma \ref{OmegaConv}, 
\begin{multline*}
    \left( \left|\widehat{\omega} \cap \widehat{\mathcal{I}}^i_k\right| \right)_{i \in [\widehat{M}]} \otimes \left( \left|\widehat{\omega} \cap \widehat{\mathcal{J}}^j_k\right| \right)_{j \in [\widehat{N}]}\\
    \xrightarrow{d} \left(\bigotimes\limits_{i \in [\widehat{M}]} \Pois\left[(\widehat{\lambda}+\varepsilon)\widehat{m}_i\right]\right) \otimes \left(\bigotimes\limits_{j \in [\widehat{N}]} \Pois\left[\frac{\widehat{n}_j}{(\widehat{\lambda}+\varepsilon)(1-t)}\right]\right).
\end{multline*}

Introduce 2 more $t$-PNG processes:
\begin{itemize}
    \item $\chi = \chi(\varepsilon) \sim t\text{-PNG}(0, \widehat{\lambda}+\varepsilon)$;
    \item $\theta \sim t\text{-PNG}(0,0)$, a zero-boundary process.
\end{itemize}

There exists a coupling of $\chi$ and $\widehat{\omega}$ in which $\chi$ is obtained from $\widehat{\omega}$ by swapping the coordinate axes; we can then take $((x_k, y_k))_{k\in\bbZ_{>0}}$ corresponding to $((\widehat{x}_k, \widehat{y}_k))_{k \in \bbZ_{>0}}$, $(\mathcal{I}^i_k)_{i \in [M], k \in \bbZ_{>0}}$ to $(\widehat{\mathcal{J}}^j_k)_{j \in [\widehat{N}], k \in \bbZ_{>0}}$, $(\mathcal{J}^j_k)_{j \in [N], k \in \bbZ_{>0}}$ to $(\widehat{\mathcal{I}}^i_k)_{i \in [\widehat{M}], k \in \bbZ_{>0}}$, $(m_i)_{i \in [M]}$ to $(\widehat{n}_j)_{j \in [\widehat{N}]}$, and $(n_j)_{j \in [N]}$ to $(\widehat{m}_i)_{i \in [\widehat{M}]}$ with respect to diagonal reflection. Therefore, 
\begin{multline}\label{chi_ineq}
    \left( \left|\chi \cap \mathcal{I}^i_k\right| \right)_{i \in [M]} \otimes \left( \left|\chi \cap \mathcal{J}^j_k\right| \right)_{j \in [N]}\\
    \xrightarrow{d} \left(\bigotimes\limits_{i \in [M]} \Pois\left[\frac{m_i}{(\widehat{\lambda}+\varepsilon)(1-t)}\right]\right) \otimes \left(\bigotimes\limits_{j \in [N]} \Pois\left[(\widehat{\lambda}+\varepsilon)n_j\right]\right).
\end{multline}

\begin{thm} 
\label{th:LocConv}
The intersections of the space-time paths of a zero-boundary $t$-PNG process $\theta$ with horizontal and vertical intervals of constant size, taken in a limiting direction with the slope $\frac{y}{x}$, converge in distribution to Poisson processes of rate $\frac{1}{\sqrt{1-t}}\sqrt{\frac{y}{x}}$ for horizontal intervals and $\frac{1}{\sqrt{1-t}}\sqrt{\frac{x}{y}}$ for vertical intervals. Formally, under previous notation, for a sequence $(\bfv_k = (x_k, y_k))_{k\in\bbZ_{>0}}$ of points in $\bbR^2_{>0}$ satisfying $|\bfv_k| \to \infty$ and $\frac{\bfv_k}{|\bfv_k|} \to \bfv = (x,y)$, for arbitrary interval families $(\mathcal{I}^i)_{i \in [M]}$ and $(\mathcal{J}^j)_{j \in [N]}$,
\begin{multline*}
    \left( \left|\theta \cap \mathcal{I}^i_k\right| \right)_{i \in [M]} \otimes \left( \left|\theta \cap \mathcal{J}^j_k\right| \right)_{j \in [N]}\\
    \xrightarrow{d} \left(\bigotimes\limits_{i \in [M]} \Pois\left[\frac{m_i}{\sqrt{1-t}}\sqrt{\frac{y}{x}}\right]\right) \otimes \left(\bigotimes\limits_{j \in [N]} \Pois\left[\frac{n_j}{\sqrt{1-t}}\sqrt{\frac{x}{y}}\right]\right)
\end{multline*}
as $k \to \infty$.
\end{thm}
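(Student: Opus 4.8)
The plan is to realise $\theta$ as the middle process of a monotone triple coupling with the two $\varepsilon$-families $\omega$ and $\chi$ already analysed, choosing $\lambda$ so that their common characteristic direction is exactly $\bfv$, and then to squeeze $\theta$ as $\varepsilon\downarrow 0$. Concretely, given $\bfv=(x,y)$ I would take $\lambda:=\lambda_{\bfv}=\sqrt{y/(x(1-t))}$ and $\widehat\lambda:=1/(\lambda(1-t))$, so that $y/x=\lambda^2(1-t)$, $x/y=\widehat\lambda^2(1-t)$, and the target rates are $\lambda m_i = m_i\sqrt{y/x}/\sqrt{1-t}$ and $n_j/(\lambda(1-t))=n_j\sqrt{x/y}/\sqrt{1-t}$. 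With these choices $\omega=\omega(\varepsilon)\sim t\text{-PNG}(\lambda+\varepsilon,0)$ and $\chi=\chi(\varepsilon)\sim t\text{-PNG}(0,\widehat\lambda+\varepsilon)$ are exactly the processes of Lemma~\ref{OmegaConv} and of~\eqref{chi_ineq}, their limiting rate pairs being $\big(\lambda+\varepsilon,\ \tfrac{1}{(\lambda+\varepsilon)(1-t)}\big)$ and $\big(\tfrac{1}{(\widehat\lambda+\varepsilon)(1-t)},\ \widehat\lambda+\varepsilon\big)$, and both of these pairs converge to $\big(\lambda,\ 1/(\lambda(1-t))\big)$ as $\varepsilon\downarrow0$. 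The hypotheses $|\bfv_k|\to\infty$ and $\bfv_k/|\bfv_k|\to\bfv$ force $y_k/x_k\to\lambda^2(1-t)$, which is precisely the direction required in Lemma~\ref{OmegaConv} and~\eqref{chi_ineq}.

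First I would build, exactly as for the triple $\alpha\le\eta\le\omega$, a coupling of $\chi,\theta,\omega$ with common bulk nucleations in which only $\omega$ carries an independent rate-$(\lambda+\varepsilon)$ bottom Poisson process and only $\chi$ carries an independent rate-$(\widehat\lambda+\varepsilon)$ left Poisson process; Corollary~\ref{cor:poi-coupling} then gives $\chi\le\theta\le\omega$. Since a larger process has more vertical and fewer horizontal segments, this yields, for every $k$ and all $i,j$, $|\chi\cap\mathcal{I}^i_k|\le|\theta\cap\mathcal{I}^i_k|\le|\omega\cap\mathcal{I}^i_k|$ and $|\omega\cap\mathcal{J}^j_k|\le|\theta\cap\mathcal{J}^j_k|\le|\chi\cap\mathcal{J}^j_k|$. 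I would then pass to probability generating functions: for $s_i,t_j\in(0,1]$, the elementary bound $|s^a-s^b|\le|\log s|\,|a-b|$ together with the sandwich gives
\[
\Big|\,\bbE\!\prod_i s_i^{|\theta\cap\mathcal{I}^i_k|}\prod_j t_j^{|\theta\cap\mathcal{J}^j_k|}-\bbE\!\prod_i s_i^{|\chi\cap\mathcal{I}^i_k|}\prod_j t_j^{|\chi\cap\mathcal{J}^j_k|}\,\Big|\le C\!\sum_i\bbE\big(|\omega\cap\mathcal{I}^i_k|-|\chi\cap\mathcal{I}^i_k|\big)+C\!\sum_j\bbE\big(|\chi\cap\mathcal{J}^j_k|-|\omega\cap\mathcal{J}^j_k|\big).
\]
The second generating function converges, as $k\to\infty$, to the generating function of the product-Poisson law of~\eqref{chi_ineq} (bounded convergence, the integrand being $\le1$), while the right-hand side converges to $C\big(\sum_i\big((\lambda+\varepsilon)-\tfrac{1}{(\widehat\lambda+\varepsilon)(1-t)}\big)m_i+\sum_j\big((\widehat\lambda+\varepsilon)-\tfrac{1}{(\lambda+\varepsilon)(1-t)}\big)n_j\big)$, which vanishes as $\varepsilon\downarrow0$. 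Since the left-hand generating function of $\theta$ carries no $\varepsilon$, letting $k\to\infty$ and then $\varepsilon\downarrow0$ forces it to converge to $\prod_i e^{\lambda m_i(s_i-1)}\prod_j e^{\frac{n_j}{\lambda(1-t)}(t_j-1)}$, and the continuity theorem for generating functions on $(0,1]^{M+N}$ finishes the proof.

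The hard part will be the uniform-integrability input that makes the interchange of limits legitimate: one needs $\bbE\big(|\omega\cap\mathcal{I}^i_k|-|\chi\cap\mathcal{I}^i_k|\big)$ to converge to the stated difference of rates (and likewise for $\mathcal{J}$), i.e.\ the second-class-particle discrepancies appearing in the proof of Lemma~\ref{OmegaConv}, such as $|\tfrac{\omega}{\eta}\cap\mathcal{I}^i_k|$, must tend to $0$ in $L^1$, not merely in probability. I expect this to follow by combining the high-probability emptiness of those discrepancies (Corollary~\ref{HSlope}), the exponential tail of Lemma~\ref{RightTail} for $X^0$, and a crude $O(|\bfv_k|^2)$ second-moment bound on the height function (e.g.\ from the concentration of longest increasing subsequences), applied through Cauchy--Schwarz. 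One further wrinkle, which is exactly why I would argue through generating functions rather than a naive monotone squeeze, is that $\chi$ bounds the horizontal counts of $\theta$ from below but the vertical counts from above, so the sandwich is not one-sided in any single partial order; the signed decomposition in the display above is what circumvents this.
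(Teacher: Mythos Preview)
Your overall strategy---choose $\lambda=\lambda_{\bfv}$, couple $\chi\le\theta\le\omega$ via Corollary~\ref{cor:poi-coupling}, then squeeze as $k\to\infty$ followed by $\varepsilon\downarrow0$---is exactly the paper's. The difference is in how the squeeze is executed, and here you have made your life harder than necessary.

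The ``wrinkle'' you single out---that $\chi$ bounds $\theta$ from below on the $\mathcal{I}$'s but from above on the $\mathcal{J}$'s---is resolved in the paper by a one-line trick you overlooked: negate the $\mathcal{J}$-coordinates. The vector
\[
\big(|\theta\cap\mathcal{I}^i_k|\big)_{i\in[M]}\otimes\big(-|\theta\cap\mathcal{J}^j_k|\big)_{j\in[N]}
\]
is then sandwiched \emph{coordinatewise} between the corresponding $\chi$- and $\omega$-vectors. From there the paper applies the standard squeeze theorem for weak convergence of lattice-valued random vectors: compare multivariate CDFs at points with non-integer coordinates (these are continuity points for every Poisson product law involved), take $\liminf$/$\limsup$ in $k$, then send $\varepsilon\downarrow0$ using that the two Poisson rate pairs $\bigl(\lambda+\varepsilon,\tfrac{1}{(\lambda+\varepsilon)(1-t)}\bigr)$ and $\bigl(\tfrac{1}{(\widehat\lambda+\varepsilon)(1-t)},\widehat\lambda+\varepsilon\bigr)$ share the limit $\bigl(\lambda,\tfrac{1}{\lambda(1-t)}\bigr)$. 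No expectations, no uniform integrability, no generating functions are needed.

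Your generating-function route is not wrong, but the ``hard part'' you identify---upgrading $|\tfrac{\omega}{\eta}\cap\mathcal{I}^i_k|\xrightarrow{p}0$ to $L^1$ convergence---is a genuine extra obligation that the paper's argument simply does not incur. Your proposed fix (Cauchy--Schwarz against a second-moment bound on height increments) is plausible but would require work not present in the paper; it is worth noting that the only control Lemma~\ref{OmegaConv} provides is convergence in probability, so your displayed bound on the right-hand side does not follow from what has been proved. In short: the coupling and the limits are right, but replace the generating-function detour with the sign-flip and the distributional squeeze, and the proof becomes both shorter and complete.
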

\begin{proof}
    Define all aforementioned constructions with $\lambda = \frac{1}{\sqrt{1-t}}\sqrt{\frac{y}{x}}$. For any $\varepsilon > 0$, Corollary \ref{cor:poi-coupling} allows a monotone coupling $\omega \ge \theta \ge \chi$. Under this coupling, for all $k$, we have 
    \begin{align*}
        (\omega \cap (\bbR_{>0}\times\{y_k\})) \supseteq (\theta \cap (\bbR_{>0}\times\{y_k\})) \supseteq (\chi \cap (\bbR_{>0}\times\{y_k\})). 
    \end{align*}
    Therefore, coordinatewise, 
    \begin{align}\label{hor-ineq}
        \left( \left|\omega \cap \mathcal{I}^i_k\right| \right)_{i \in [M]} \ge \left( \left|\theta \cap \mathcal{I}^i_k\right| \right)_{i \in [M]} \ge \left( \left|\chi \cap \mathcal{I}^i_k\right| \right)_{i \in [M]}.
    \end{align}
    Similarly, 
    \begin{align}\label{vert-ineq}
        \left(\left|\chi \cap \mathcal{J}^j_k\right|\right)_{j \in [N]} \ge \left(\left|\theta \cap \mathcal{J}^j_k\right|\right)_{j \in [N]} \ge \left(\left|\omega \cap \mathcal{J}^j_k\right|\right)_{j \in [N]}.
    \end{align}
    Combining \ref{hor-ineq} and \ref{vert-ineq} gives 
    \begin{align}
    \label{E:1}
    \begin{split}
        &\left( \left|\omega \cap \mathcal{I}^i_k\right| \right)_{i \in [M]} \otimes \left( -\left|\omega \cap \mathcal{J}^j_k\right| \right)_{j \in [N]}\\ 
        \ge &\left( \left|\theta \cap \mathcal{I}^i_k\right| \right)_{i \in [M]} \otimes \left( -\left|\theta \cap \mathcal{J}^j_k\right| \right)_{j \in [N]}\\
        \ge &\left( \left|\chi \cap \mathcal{I}^i_k\right| \right)_{i \in [M]} \otimes \left( -\left|\chi \cap \mathcal{J}^j_k\right| \right)_{j \in [N]}.
    \end{split}
    \end{align}
    Now send $k \to \infty$ and then $\varepsilon \to 0$ in \eqref{E:1}. Then, since the right-hand sides in \ref{omega_ineq} and \ref{chi_ineq} have a common limit as $\varepsilon \to 0$, we obtain that 
    \begin{align*}
        \left( \left|\theta \cap \mathcal{I}^i_k\right| \right)_{i \in [M]} \otimes \left( \left|\theta \cap \mathcal{J}^j_k\right| \right)_{j \in [N]} \xrightarrow{d} \left(\bigotimes\limits_{i \in [M]} \Pois\left[\lambda m_i\right]\right) \otimes \left(\bigotimes\limits_{j \in [N]} \Pois\left[\frac{n_j}{\lambda(1-t)}\right]\right)
    \end{align*}
    as $k \to \infty$.
\end{proof}

\bibliographystyle{habbrv}
\bibliography{Refs}

\end{document}